\documentclass[12pt]{article}

\usepackage[paper=letterpaper,margin=0.7in,twoside=false,includehead]{geometry}

\usepackage{amsfonts,amsmath,amsthm, amssymb, thmtools, thm-restate} 
 
\usepackage{hyperref}
\usepackage[capitalize]{cleveref}

\usepackage{graphicx}
\usepackage{fancyhdr}
\usepackage{xparse}
\usepackage{mathtools}
\usepackage{float}
\usepackage{adjustbox}

\usepackage[shortlabels]{enumitem}
\usepackage[normalem]{ulem}

\SetEnumitemKey{thmparts}{topsep=2pt plus 1pt minus1pt, itemsep=1pt plus0.5pt minus0.5pt}

\usepackage{xcolor}
\usepackage{verbatim}
\usepackage{fancyvrb}
\usepackage{fvextra}

\usepackage[T1]{fontenc}

\usepackage{tikz}
\usetikzlibrary{arrows,petri,topaths}
\usepackage{tkz-berge}
\usepackage[vcentermath]{youngtab} 
\usepackage{young}
\usetikzlibrary{decorations.pathreplacing}
\usetikzlibrary{arrows,shapes, positioning}
\usetikzlibrary{calc}
\usetikzlibrary{graphs, graphs.standard}
\usepackage{wrapfig}

%% My ``new normal'' set of macros.
%% Requires mathtools

\newcommand{\N}{\mathbb{N}}

\newcommand{\C}{\mathbb{C}}

\DeclarePairedDelimiterX\setof[2]{\{}{\}}{#1\,:\,#2}

\DeclareDocumentCommand{\shad}{O{q}}{\partial_{#1}}
\DeclareDocumentCommand{\upshad}{O{t}}{U^{#1}}

\DeclareDocumentCommand{\k}{O{t}}{k^{#1}}
\DeclareDocumentCommand{\layer}{O{s}O{\N}}{\binom{#2}{#1}}
\DeclareDocumentCommand{\nlayer}{O{n} m}{\binom{[{#1}]}{#2}}
\DeclareDocumentCommand{\C}{O{s}}{\mathcal{C}_{#1}}

\DeclareDocumentCommand{\iseq}{m m}{n_{#1},n_{{#1}-1},\dots,n_{{#1}-{#2}+1}}
\DeclareDocumentCommand{\is}{O{s}O{s}O{\ell}}{[\iseq{#2}{#3}]_{#1}}
\DeclareDocumentCommand{\seq}{O{s}O{\ell}}{(\iseq{#1}{#2})}

\newtheorem{thm}{Theorem}[section]
\newtheorem{lem}[thm]{Lemma}
\newtheorem{cor}[thm]{Corollary}

\theoremstyle{definition}
\newtheorem{defn}[thm]{Definition}
\newtheorem{qu}{Question}

\newtheorem{rem}[thm]{Remark}
\newtheorem{obs}[thm]{Observation}
\newtheorem{example}[thm]{Example}

\numberwithin{case}{thm}

\numberwithin{subcase}{case}
\newtheorem{prop}[thm]{Proposition}
\newtheorem{porism}[thm]{Porism}

\newtheorem{construction}[thm]{Construction}

\newcommand{\diam}{\mathrm{diam}}

\hyphenation{Hamil-ton-icity}
\hyphenation{Hamil-ton-ian}
\begin{document}

\pagestyle{plain}

\title{Radio gracefulness of Moore graphs and beyond}
\author{An Cao, Aleyah Dawkins, Julian Hutchins, Orlando Luce}
\maketitle

\abstract{
The study of radio graceful labelings is motivated by modeling efficient frequency assignment to radio towers, cellular towers, and satellite networks. For a simple, connected graph $G = (V(G), E(G))$, a radio labeling is a mapping $f: V(G) \rightarrow \mathbb{Z}^+$ satisfying (for any distinct vertices $u,v$)\begin{align*}
        |f(u)-f(v)| + d(u,v) &\geq \diam(G)+1,
\end{align*} where $d(u,v)$ is the distance between $u$ and $v$ in $G$ and $\diam(G)$ is the diameter of $G$. A graph is radio graceful if there is a radio labeling such that $f(V(G)) = \{1, \dots, |V(G)|\}$.
In this paper, we determine the radio gracefulness of low-diameter graphs with connections to high-performance computing, including Moore graphs, bipartite Moore graphs, and approximate Moore graphs like $(r,g)-$cages, Erd\H{o}s-Rényi polarity graphs, and McKay-Miller-\v{S}irá\v{n} graphs. We prove a new necessary and sufficient condition for radio graceful bipartite graphs with diameter $3$. We compute the radio number of $(r,g)-$cages arising from generalized $n-$gons. Additionally, we determine Erd\H{o}s-Rényi polarity graphs and McKay-Miller-\v{S}irá\v{n} graphs are radio graceful.
}

\section{Introduction}\label{sec:intro}

    Graph labeling is an integral topic of study in graph theory, with relations to problems in graph coloring, network design, and communication theory. A graph labeling is a function that assigns integers to the vertices and/or edges of a graph and that satisfies certain conditions. The labeling conditions often model real world problems, and the study of graph labeling reveals useful combinatorial structures or optimized assignments for practical applications.

    One such application is \emph{frequency assignment problem} (or \emph{channel assignment problem}). First introduced by Hale in 1980 \cite{halechannelassign}, it addresses the challenge of assigning frequencies to transmitters such that interference is minimized. This foundational problem inspired the radio labeling problem formulated by Chartrand et al. in 2001 \cite{chartrand2001radiolabeling}, as well as numerous variants \cite{L21, Lhk, PONRAJ2015224}, which has modern applications to satellite and wireless communication networks. In this setting, the graph represents a network of radio stations, with edges modeling geographic proximity, which can potentially lead to signal interference. The goal is to assign distinct frequencies to each station, such that stations that are closer in the network (i.e., have shorter distance in the graph) are assigned more widely separated frequencies. For a simple, connected graph $G = (V(G), E(G))$, a radio labeling can then be defined as a mapping $f: V(G) \rightarrow \mathbb{Z}^+$ satisfying \begin{align*}
        |f(u)-f(v)| + d(u,v) &\geq \diam(G)+1,
    \end{align*} where $d(u,v)$ is the distance between $u$ and $v$ in $G$ and $\diam(G)$ is the diameter of $G$. 

    A secondary objective of the frequency assignment problem is the minimization of bandwidth. This optimization corresponds to finding a labeling that minimizes the span, or maximum integer assigned as a label. We call a radio labeling \emph{graceful} if the difference between the largest and smallest assigned labels is minimal, i.e. $|V(G)|-1$. Previous research has investigated the radio gracefulness of some common families of graphs \cite{hamming}, their radio numbers \cite{multilevel-path-cycle,cartesianprod-rn,Zhang-cycle-radio}, and more general characterizations of radio graceful graphs \cite{chartrand2001radiolabeling,saha2020}.

Motivated by recent advances in high-performance computing network design, we are interested in studying efficiently structured networks, or low-latency, high-bandwidth, and scalable networks, and their applicability to the frequency assignment problem with the existence of a radio graceful labeling for such networks. In extremal graph theory, these networks relate to the degree-diameter problem and the degree-girth problem. The degree-diameter problem seeks to determine the largest possible number of vertices in a graph of given maximum degree $\Delta$ and diameter $D$. An upper bound on the order of such graphs is called the Moore bound $M(\Delta, D)$, and is given by the formula:
\begin{align*}
    M(\Delta, D) = 1 + \Delta\sum_{i = 0}^{D-1}(\Delta-1)^i. 
\end{align*}

Graphs that attain this bound are known as Moore graphs. Moore graphs exhibit a highly symmetric structure, including being distance-regular and vertex-transitive. Vertex-transitivity guarantees self-centeredness, a necessary condition for radio graceful graphs. Highly symmetric graphs are also often self-centered. All possible Moore graphs are summarized in \cref{table1} below \cite{moore-miller}.

\begin{table}[H]
    \centering
    \begin{tabular}{|c|c|c|}
        \hline
        Graph & Moore graph & Cage \\
         & $(\Delta, D)$ & $(r,g)$ \\\hline
         Complete graphs $K_n$& $(n-1,1)$& $(n-1,3)$ \\\hline
         Odd cycles $C_{2n+1}$& $(2,n)$& $(2,2n+1)$ \\\hline
         Petersen graph & $(3,2)$& $(3,5)$ \\\hline
         Hoffman-Singleton graph & $(7,2)$& $(7,5)$ \\\hline
        Hypothetical graph of diameter 2, girth 5, degree 57 & $(57,2)$& $(57,5)$ \\\hline
    \end{tabular}
    \caption{List of all possible Moore graphs}
    \label{table1}
\end{table}

A closely related problem is the degree-girth problem, which is to find the smallest possible regular graph of a given degree $r$ and girth $g$. Such graphs are called $(r,g)-$cages. Notably, the Moore bound $M(\Delta, D)$ exactly provides the lower bound on the order of a $(\Delta,2D+1)-$cage. As a result, every Moore graph is also a cage, though not every cage attains the Moore bound.

While all classical Moore graphs have odd girth, there is a generalized definition for even girth, that is, the bipartite Moore graphs, or generalized Moore graphs, which are the largest possible bipartite graphs of given maximum degree $\Delta$ and diameter $D$. The bipartite Moore bound $M_b(\Delta, D)$ given by

$$ M_b(\Delta, D) = 2\sum_{i = 0}^{D-1}(\Delta-1)^i$$ is the lower bound on the order of a cage with even girth, hence these graphs are the smallest possible graphs with the given maximum degree $\Delta$ and girth $g=2D$, that is $(\Delta,2D)-$cages. As seen in \cref{table2}, the bipartite Moore graphs with diameters $ 3, 4, 6$ correspond to the incidence graphs of projective planes of order $q$, generalized quadrangles of order $q$, and generalized hexagons of order $q$, respectively. Such bipartite Moore graphs are only known to exist if $q$ is a prime power.

\begin{table}[H]
    \centering
    \begin{tabular}{|c|c|c|}
        \hline
        Graph & bi-Moore graph & Cage \\
         & $(\Delta, D)$ & $(r,g)$ \\\hline
         Complete bipartite graphs $K_{n,n}$& $(n,2)$& $(n,4)$ \\\hline
         Even cycles $C_{2n}$& $(2,n)$& $(2,2n)$ \\\hline
         Incidence graph of projective planes of order $q$ & $(q+1,3)$& $(q+1,6)$ \\\hline
         Incidence graph of generalized quadrangles of order $q$ & $(q+1,4)$& $(q+1,8)$ \\\hline
         Incidence graph of generalized hexagons of order $q$ & $(q+1,6)$& $(q+1,12)$ \\\hline
         
    \end{tabular}
    \caption{List of all possible bipartite Moore graphs}
    \label{table2}
\end{table}

Other interesting families of graphs related to the Moore bound, and specifically the Moore bound for diameter 2, are Erd\H{o}s-Rényi polarity graphs \cite{ER_graph} (or Brown graphs \cite{Brown_1966}) and McKay-Miller-\v{S}irá\v{n} graphs (MMS graphs) \cite{MMS}. For odd prime powers $q \ge 7$, the polarity graphs give the currently largest known order of graphs with diameter $2$ and maximum degree $q+1$. In particular, a polarity graph with maximum degree $q+1$ has $q^2+q+1$ vertices, which is only $d$ less than the Moore bound. Modifications of these graphs have been constructed to show that the Moore bound can be approached asymptotically, even if they cannot be reached \cite{siran}. The MMS graphs are an infinite class of vertex-transitive graphs with diameter $2$ closest to reaching the Moore bound \cite{MMS}. These constructions, whose sizes are ``close'' to the Moore bound, are called approximate Moore graphs \cite{approximateMoore}. In particular, polarity graphs are constant additive approximations, while MMS graphs are constant multiplicative approximations.

In this paper, we consider the graphs outlined above. \cref{sec:prelim} introduces definitions and proof methods used throughout the paper. In \cref{sec:main}, we discuss in more detail bipartite graphs, bipartite Moore graphs corresponding to $(r,g)-$cages, and approximate Moore graphs, including Erd\H{o}s-Rényi polarity graphs and MMS graphs. Conditions for a bipartite graph to be radio graceful are given, as well as a necessary and sufficient condition for diameter $3$ bipartite graphs to be radio graceful. In addition, the radio number of a bipartite Moore graph corresponding to a $(r,g)-$ cage is determined, and the radio gracefulness of Erd\H{o}s-Rényi polarity graphs and MMS graphs is determined. We conclude in \cref{sec:open} with a question on the radio gracefulness of all $(r,g)-$cages.

\section{Preliminaries}\label{sec:prelim}

    Given a graph $G$, a \emph{path} in $G$ is a sequence of adjacent vertices without repeat. A sequence of adjacent vertices where only the initial and final vertex are the same is a \emph{cycle}. A \emph{Hamiltonian path (cycle)} in $G$ is a path (cycle) that visits every vertex of $G$ exactly once. If $G$ contains a Hamiltonian path, it is \emph{traceable}. If it contains a Hamiltonian cycle, it is \emph{hamiltonian}. The \emph{distance} between two vertices $u$ and $v$ is the length of the shortest path between them. We denote this as $d(u,v)$ (or $d_G(u,v)$ when specifying the graph $G$ is necessary). When $d(u,v) = 1$, we write $u \sim v$ for convenience. The \emph{diameter} of $G$ is denoted and defined as $\diam(G) = \{\max\{d(u,v)\} \ | \ u,v \in V(G)\}$. The \emph{girth} of a graph is the length of the smallest cycle contained within it. Throughout this paper, we take all graphs $G$ to be simple. We also write $r$ to denote the degree of a regular graph.

    \begin{defn}
        The \emph{antipodal graph} of a graph $G$, denoted as $A(G)$, is a graph on the same set of vertices as G where each pair of vertices $u$ and $v$ are connected in $A(G)$ if and only if $d_G(u,v) = \diam(G)$. For $\diam(G)=2$, we have $A(G)=\overline{G}$.
    \end{defn}

    We now recall the definition of a radio labeling along with its natural refinement.

    \begin{defn}
    Given a simple connected graph $G$, a labeling $f: V(G) \to \mathbb{Z}^+$ is a \emph{radio labeling} if it satisfies the inequality \begin{align}\label{radio_condition}
        |f(u)-f(v)| + d(u,v) &\geq \text{diam}(G)+1
    \end{align}for all distinct vertices $u$ and $v$. 
    \end{defn}
    The \emph{span} of a labeling $f$ is the largest label assigned to a vertex. The \emph{radio number} of $G$ is $rn(G) = \min\{span(f) \ | \ f \text{ is a radio labeling of $G$}\}$. A radio labeling is \emph{graceful} if $span(f) = |V(G)|$. A graph $G$ is \emph{radio graceful} if a radio graceful labeling exists, or equivalently, if $rn(G) = |V(G)|$.

    The following result proves useful for quickly determining if a graph is not radio graceful.
    
    \begin{thm}[Saha and Basunia \cite{saha2020}]
        \label{Antipodal}
        If a graph $G$ is radio graceful, then $A(G)$ has a Hamiltonian path.
    \end{thm}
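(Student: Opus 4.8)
The plan is to extract the Hamiltonian path directly from the ordering of the vertices induced by a radio graceful labeling. Suppose $G$ is radio graceful with $n = |V(G)|$, and let $f\colon V(G)\to\{1,\dots,n\}$ be a radio graceful labeling, which by definition is a bijection onto $\{1,\dots,n\}$. Relabel the vertices as $v_1,v_2,\dots,v_n$ so that $f(v_i)=i$ for each $i$.

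The key observation is to apply the radio condition \eqref{radio_condition} to a pair of consecutive vertices $v_i$ and $v_{i+1}$ in this ordering. Here $|f(v_i)-f(v_{i+1})| = 1$, so the inequality
\[
|f(v_i)-f(v_{i+1})| + d(v_i,v_{i+1}) \ge \diam(G)+1
\]
becomes $d(v_i,v_{i+1}) \ge \diam(G)$. Since $d(v_i,v_{i+1}) \le \diam(G)$ trivially, we get $d(v_i,v_{i+1}) = \diam(G)$, which is exactly the condition for $v_i$ and $v_{i+1}$ to be adjacent in $A(G)$.

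Hence $v_1 v_2 \cdots v_n$ is a path in $A(G)$ visiting every vertex of $G$ exactly once (using that $f$ is a bijection), i.e. a Hamiltonian path in $A(G)$. There is no real obstacle here; the only point requiring care is to note that gracefulness gives a bijection onto the full set $\{1,\dots,n\}$, so that every vertex appears exactly once in the sequence $v_1,\dots,v_n$ and consecutive labels really do differ by $1$.
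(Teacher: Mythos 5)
Your proof is correct: ordering the vertices by their labels and applying the radio condition to consecutive labels (difference $1$) forces $d(v_i,v_{i+1})=\diam(G)$, which is exactly adjacency in $A(G)$, and gracefulness guarantees every vertex appears once in the sequence. The paper itself cites this result from Saha and Basunia without reproducing a proof, and your argument is precisely the standard one underlying it, so there is nothing further to compare.
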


    When restricting to graphs with diameter $2$, Chartrand, Erwin, Harary, and Zhang were able to give a necessary and sufficient condition for radio gracefulness that proves very helpful below.
    
    \begin{thm}[Chartrand, Erwin, Harary, and Zhang \cite{chartrand2001radiolabeling}]
        \label{Antipodal_2}
        Let $G$ be a graph with diameter $2$. Then $G$ is radio graceful if and only if $A(G)$ has a Hamiltonian path.
    \end{thm}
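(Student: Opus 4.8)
The plan is to unwind the radio condition in the special case $\diam(G)=2$ and observe that radio gracefulness then translates essentially verbatim into the existence of a Hamiltonian path in $A(G)$. The forward implication is already contained in \cref{Antipodal}, so the real content is the converse, which uses crucially that the diameter equals (not merely is at most) $2$.

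First I would record the following reformulation. When $\diam(G)=2$, the radio inequality \eqref{radio_condition} reads $|f(u)-f(v)|+d(u,v)\ge 3$. Since any two distinct vertices are at distance $1$ or $2$, this says: if $u\sim v$ then $|f(u)-f(v)|\ge 2$, while if $d(u,v)=2$ the inequality is automatic for an injective $f$. Hence a labeling $f\colon V(G)\to\mathbb Z^+$ is a radio labeling precisely when it is injective and assigns labels differing by at least $2$ to adjacent vertices.

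Next, suppose $f$ is a graceful radio labeling, so $f(V(G))=\{1,\dots,n\}$ with $n=|V(G)|$; write $v_i=f^{-1}(i)$. For indices with $|i-j|\ge 2$ the reformulated condition holds trivially, so the only constraint is that $v_i\not\sim_G v_{i+1}$ for each $1\le i\le n-1$. Because $\diam(G)=2$, the vertices $v_i$ and $v_{i+1}$ being distinct and non-adjacent in $G$ forces $d_G(v_i,v_{i+1})=2$, i.e.\ $v_i\sim v_{i+1}$ in $A(G)=\overline{G}$. Thus $v_1,v_2,\dots,v_n$ is a Hamiltonian path in $A(G)$, giving the forward direction (alternatively, one simply invokes \cref{Antipodal}).

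Conversely, given a Hamiltonian path $v_1,\dots,v_n$ in $A(G)$, I would set $f(v_i)=i$, which is a bijection onto $\{1,\dots,n\}$. Consecutive vertices $v_i,v_{i+1}$ are adjacent in $A(G)$, hence non-adjacent in $G$, so $|f(v_i)-f(v_{i+1})|=1$ is acceptable; and any two vertices whose labels differ by at least $2$ satisfy \eqref{radio_condition} automatically. Hence $f$ is a graceful radio labeling and $G$ is radio graceful. There is no genuine obstacle here beyond bookkeeping; the only subtlety worth flagging is why the equivalence is special to diameter $2$: if $\diam(G)>2$, non-adjacent vertices can be at distance greater than $2$ and so need not be joined in $A(G)$, and an $A(G)$-edge no longer guarantees that consecutive labels are permissible, so neither implication survives in general.
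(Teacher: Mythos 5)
Your argument is correct. Note that the paper does not prove \cref{Antipodal_2} at all---it is quoted from Chartrand, Erwin, Harary, and Zhang---so there is no internal proof to compare against; the closest analogue in the paper is the proof of \cref{TraceBip}, the diameter-$3$ bipartite version, and your converse direction (label sequentially along a Hamiltonian path of $A(G)=\overline{G}$, observe that consecutive labels go to vertices at distance $2$ and that label gaps of at least $2$ make \eqref{radio_condition} automatic) is exactly the same labeling-along-the-path technique. Your forward direction, unwinding gracefulness to the statement that consecutively labeled vertices are non-adjacent in $G$ and hence, by $\diam(G)=2$, adjacent in $A(G)$, is also sound and slightly more self-contained than simply citing \cref{Antipodal}, which is what the paper would do. Your closing remark correctly identifies why the equivalence is particular to diameter $2$ (and, per \cref{TraceBip}, to situations where distance-$2$ pairs can be controlled separately), so there is no gap to report.
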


    The requirement of a Hamiltonian path naturally leads to the use of the following results. 

    \begin{thm}[Dirac \cite{Dirac}]
    \label{Dirac}
        If the minimum degree of a graph $G$ on n vertices is at least $\frac{n-1}{2}$ then $G$ has a Hamiltonian path. 
    \end{thm}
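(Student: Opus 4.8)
The plan is to run the standard longest-path rotation argument (a Pósa-type exchange). First I would record that the hypothesis forces $G$ to be connected: every connected component has maximum degree at most one less than its order, so the degree bound makes each component contain at least $\frac{n-1}{2}+1=\frac{n+1}{2}$ vertices, and two disjoint sets of that size already exceed $n$. Next, fix a path $P=v_0v_1\cdots v_k$ of maximum length in $G$. By maximality, every neighbor of the endpoint $v_0$, and likewise every neighbor of $v_k$, must lie on $P$, since otherwise we could prepend or append it to lengthen $P$. If $k+1=n$ then $P$ is already a Hamiltonian path and we are done, so assume $k+1<n$.

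Now I would introduce the two index sets $A=\{\,i\in\{0,\dots,k-1\}: v_0\sim v_{i+1}\,\}$ and $B=\{\,i\in\{0,\dots,k-1\}: v_k\sim v_i\,\}$. Since all neighbors of $v_0$ and of $v_k$ lie on $P$, we have $|A|=\deg(v_0)\ge\frac{n-1}{2}$ and $|B|=\deg(v_k)\ge\frac{n-1}{2}$, hence $|A|+|B|\ge n-1\ge k+1>k=|\{0,\dots,k-1\}|$, so $A\cap B\neq\emptyset$. Picking $i\in A\cap B$, the edges $v_0v_{i+1}$ and $v_iv_k$ let us reroute $P$ into the cycle
\[
C:\quad v_0\,v_1\cdots v_i\,v_k\,v_{k-1}\cdots v_{i+1}\,v_0,
\]
which visits exactly the $k+1$ vertices of $P$. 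Since $k+1<n$ and $G$ is connected, some vertex of $C$ has a neighbor $y\notin V(C)$; cutting $C$ at that vertex and appending $y$ yields a path on $k+2$ vertices, contradicting the maximality of $P$. Hence $k+1=n$ and $P$ is the required Hamiltonian path.

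This is a classical argument, so I do not anticipate a genuine obstacle; the only points that need care are the connectivity observation and arranging the counting inequality to also cover the boundary case in which $P$ already spans $V(G)$. The conceptual crux is the rotation turning the two ``crossing'' chords $v_0v_{i+1}$ and $v_iv_k$ into a spanning cycle of $V(P)$, after which connectivity finishes the job. Alternatively, one can derive the statement from the Hamiltonian-cycle form of Dirac's theorem by adjoining a new vertex joined to all of $V(G)$: the resulting graph on $n+1$ vertices has minimum degree at least $\frac{n+1}{2}$, hence a Hamiltonian cycle, and deleting the new vertex leaves a Hamiltonian path of $G$.
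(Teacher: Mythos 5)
Your argument is correct, but note that the paper does not prove this statement at all: it is quoted as a known classical result and attributed to Dirac, so there is no in-paper proof to compare against. What you supply is the standard self-contained proof, and it checks out. The connectivity observation (each component has at least $\frac{n-1}{2}+1=\frac{n+1}{2}$ vertices, so there can be only one) is valid; on a longest path $v_0\cdots v_k$ with $k+1<n$ the index sets $A$ and $B$ satisfy $|A|=\deg(v_0)$ and $|B|=\deg(v_k)$ because all neighbors of the endpoints lie on the path, and $|A|+|B|\ge n-1\ge k+1>k$ forces a common index, so the rotation producing a cycle on $V(P)$ and the subsequent extension via connectivity go through, including the boundary cases $i=0$ and $i=k-1$. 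Your alternative reduction (add a universal vertex and invoke Dirac's Hamiltonian-cycle theorem, since the augmented graph on $n+1$ vertices has minimum degree at least $\frac{n+1}{2}$) is also valid; the only pedantic caveat is that Dirac's cycle theorem requires at least three vertices, so the cases $n\le 2$ should be dispatched trivially, which they are. Either route would serve as a complete proof of the cited theorem.
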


    \begin{thm}[Corollary of Moon and Moser \cite{Moon1963}]\label{reg-bip-ham}
    If $G$ is an $r-$regular bipartite graph $G = (U,V,E)$, where $|U| = |V| = n < 2r$, then $G$ has a Hamiltonian path.
    \end{thm}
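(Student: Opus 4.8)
The plan is to deduce this corollary from the Moon--Moser theorem \cite{Moon1963}, which we use in its Ore-type form: if $H = (U, V, E)$ is a bipartite graph with $|U| = |V| = m \ge 2$ and $\deg(u) + \deg(v) \ge m + 1$ for every non-adjacent pair $u \in U$, $v \in V$, then $H$ has a Hamiltonian cycle. The relevant special case is its Dirac-type form: a balanced bipartite graph on parts of size $m$ with minimum degree exceeding $m/2$ is Hamiltonian. Note that the generic degree bound of \cref{Dirac} is \emph{not} enough here, as on the $2n$ vertices of $G$ it would demand $r \ge n$ rather than merely $2r > n$, so the bipartite structure is genuinely being exploited.

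First I would clear the degenerate cases: if $n \le 1$, then $n < 2r$ forces $r = 1$ and $G = K_{1,1}$, a single edge, which is trivially traceable; so assume $n \ge 2$. Because $G$ is $r$-regular, $\deg(u) + \deg(v) = 2r$ for every $u \in U$ and $v \in V$, and the hypothesis $n < 2r$ gives $2r \ge n + 1$ since both sides are integers. Hence the Moon--Moser degree-sum condition holds for all pairs of vertices lying in opposite parts, in particular for the non-adjacent ones, and the theorem applied with $m = n$ yields a Hamiltonian cycle of $G$. Deleting any one edge of this cycle leaves a Hamiltonian path, so $G$ is traceable, as claimed.

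I do not anticipate a genuine obstacle in this argument; the only points requiring attention are matching the precise hypotheses of the cited theorem --- balanced bipartition, parts of size at least $2$, and the strict inequality $n < 2r$ (equivalently $r > n/2$) --- and observing that Moon--Moser in fact delivers the stronger conclusion of a Hamiltonian cycle, from which the Hamiltonian path is immediate.
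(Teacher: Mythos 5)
Your deduction is correct and is exactly the route the paper intends: the statement is labeled a corollary of Moon--Moser, and you supply the straightforward derivation via their bipartite Ore/Dirac-type condition ($\deg(u)+\deg(v)=2r\geq n+1$ for vertices in opposite parts), obtaining a Hamiltonian cycle and hence a Hamiltonian path. The paper gives no further proof, so there is nothing different to compare; your handling of the degenerate case $n\le 1$ and the integer step $n<2r \Rightarrow 2r\ge n+1$ is fine.
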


    \subsection{Generalized polygons}

    Many of the graphs considered in this paper arise from \emph{generalized polygons} (or \emph{generalized $n-$gons}).

    \begin{defn}
        Let $\mathcal{I}$ be an ordered triple $(P,L,I)$ where $P$ is the nonempty \emph{set of points} $p$ of $\mathcal{I}$, $L$ is the nonempty \emph{set of lines} $l$ of $\mathcal{I}$, and $I \subseteq P \times L$ is the \emph{incidence relation}. Let $G_I$ be the associated bipartite \emph{incidence graph} on $P \cup L$ with edges joining the points of $P$ to their incident lines in $L$. Then $\mathcal{I}$ is a \emph{generalized $n-$gon} if the following four conditions are satisfied:
        \begin{enumerate}
        \item There exist $s \ge 1$ and $t \ge 1$ such that every line is incident to exactly $s+1$ points and every point is incident to exactly $t+1$ lines.
        \item Any two distinct lines intersect in at most one point and there is at most one line through any two distinct points.
        \item  The diameter of the incidence graph $G_I$ is $n$.
        \item The girth of $G_I$ is $2n$.
        \end{enumerate}
    \end{defn}

    The generalized $n-$gon $\mathcal{I}$ has order $(s,t)$, or order $s$ in the case $s=t$. When $n = 3, 4, 6$, generalized $n-$gons correspond to finite projective planes, generalized quadrangles, and generalized hexagons, respectively \cite{generalized-polygons}. We now discuss each in detail, introducing the properties that are unique to each, beginning with $n=3$. In this case, it is known $s=t$ with the existence of the finite projective plane known when $s$ is a prime power $q$.
    
    \begin{defn}
        A \emph{projective plane} $\mathbb{P}$ of order $q$ is an ordered triple $(P,L,I)$ where $P$ is the \emph{set of points} $p$ of $\mathbb{P}$, $L$ is the \emph{set of lines} $l$ of $\mathbb{P}$, and $I \subseteq P \times L$ is the \emph{incidence relation} satisfying the following four properties: \begin{enumerate} 
        \item Any two points determine a line. 
        \item Any two lines determine a point. 
        \item Every point is incident with $q+1$ lines. 
        \item Every line is incident with $q+1$ points.
        \end{enumerate}
    \end{defn}

    From this, a combinatorial argument can be made to prove the following statement.

    \begin{rem}\label{incidenceGraph_plane}
        The incidence graph $G_I$ of a finite projective plane of order $q$ is a $(q+1)-$regular bipartite graph with diameter $3$, girth $6$, and parts corresponding to the set of points $P$ and the set of lines $L$ where $|P| = |L| = q^2+q+1$.
    \end{rem}

    The smallest projective plane, the Fano plane, is seen in Figure~\ref{fig:fano_cage1}. The Fano plane, arising from a field of order $2$, has seven points and seven lines, with three points on every line and three lines through every point.

    \begin{figure}[H]
        \centering
        \includegraphics[width=0.3\linewidth]{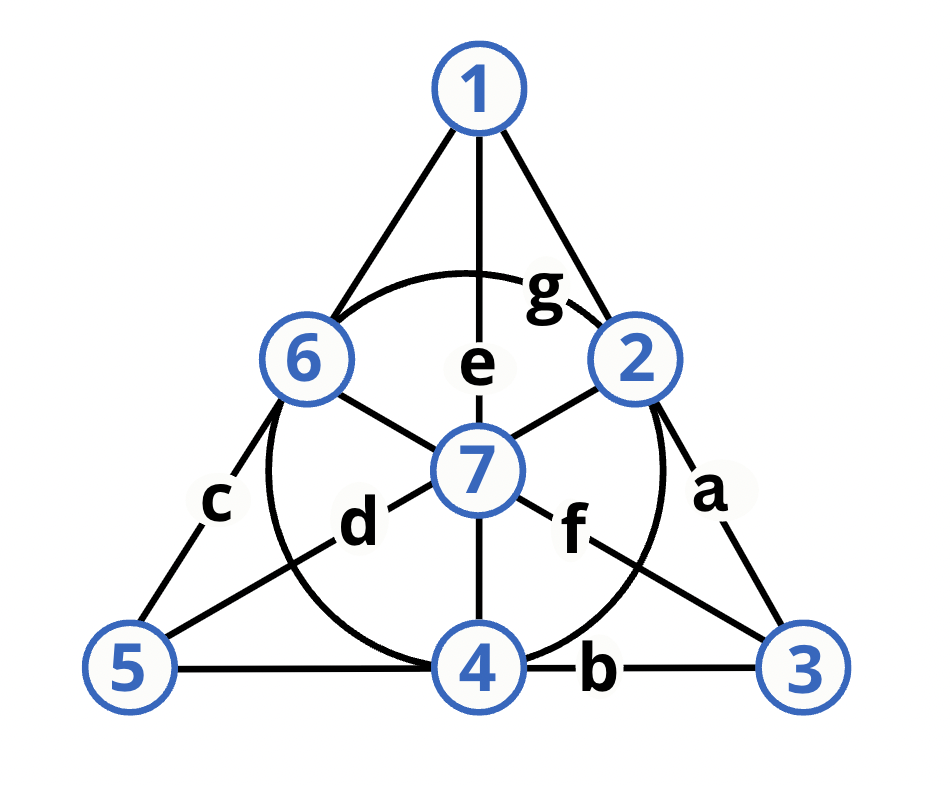}
        \caption{Fano plane}
        \label{fig:fano_cage1}
    \end{figure}

    When $q = 2$ the incidence graph $G_I$ of the Fano plane, as in Figure~\ref{fig:fano_cage2}, is the unique $(3,6)-$cage (Heawood graph) \cite{CageSurvey}. The graph $A(G_I)$ has edges between points and lines that are not incident in $\mathbb{P} \cong PG(2,2)$.

    \begin{figure}[H]
        \centering
        \includegraphics[width=0.6\linewidth]{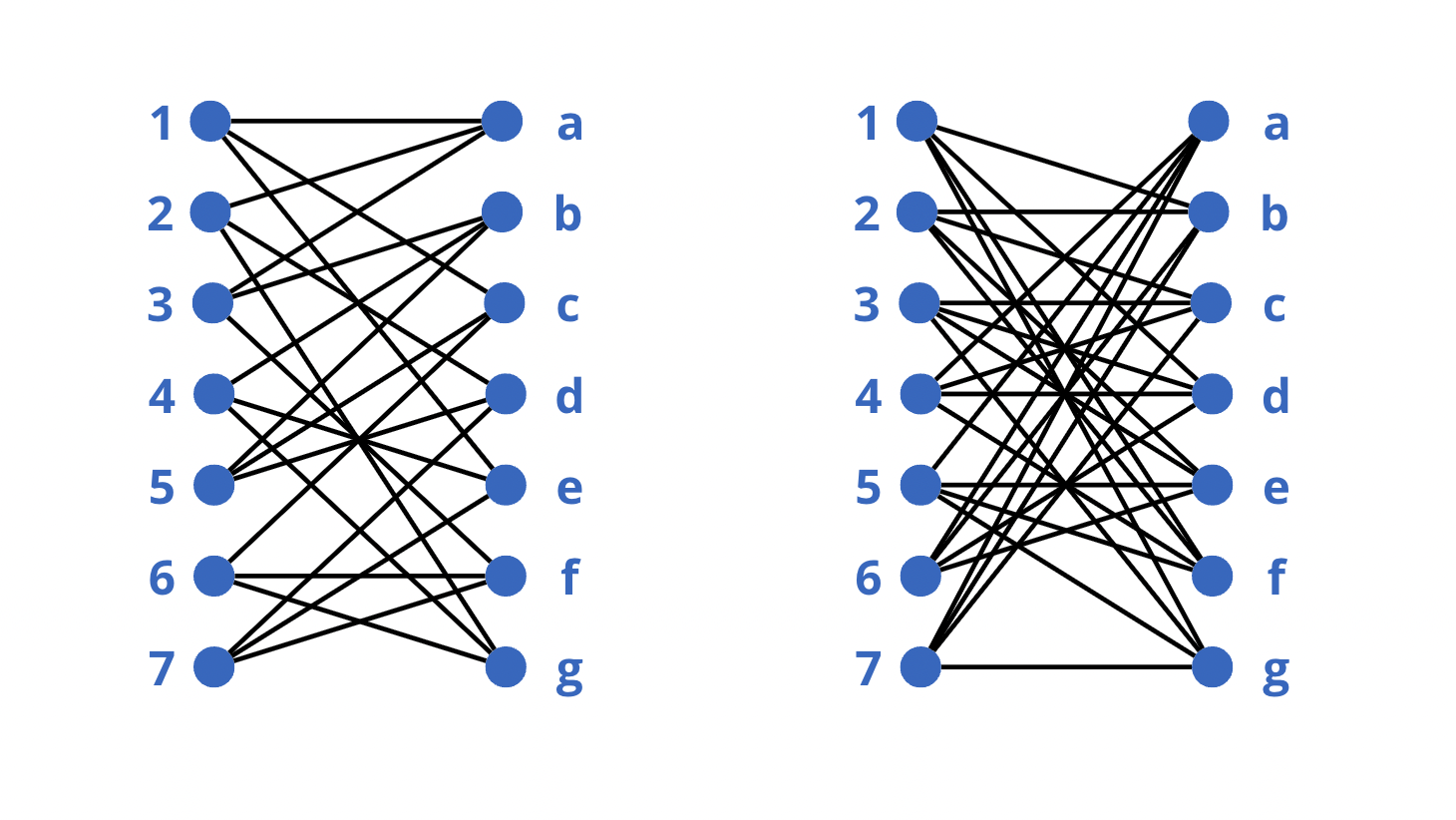}
        \caption{$G_I$ and $A(G_I)$}
        \label{fig:fano_cage2}
    \end{figure}
    
    \begin{defn}
        A \emph{generalized quadrangle} of order $(s, t)$ has $(s + 1)(st + 1)$ points and $(t + 1)(st + 1)$ lines, and satisfies the following five properties:
        \begin{enumerate} 
        \item Any two points lie on at most one line.
        \item Any two lines intersect in at most one point.
        \item Every line is incident with $s + 1$ points.
        \item Every point is incident with $t + 1$ lines.
        \item For any point $p \in P$ and line $l \in L$ where $(p,l) \notin I$, there is exactly one line incident with $p$ and intersecting $l$.
        \end{enumerate}
    \end{defn}

    \begin{rem}\label{incidenceGraph_quad}
        The incidence graph $G_I$ of a generalized quadrangle of order $q$ is a $(q+1)-$regular bipartite graph with diameter $4$, girth $8$, and parts corresponding to the set of points $P$ and the set of lines $L$ where $|P|=|L|=(q+1)(q^2+1)$.
    \end{rem}

    \begin{defn}
        A \emph{generalized hexagon} of order $(s, t)$ has $(s+1)(s^2t^2+st+1)$ points and $(t+1)(s^2t^2+st+1)$ lines, and satisfies the following five properties:
        \begin{enumerate} 
        \item Any two points lie on at most one line.
        \item Any two lines intersect in at most one point.
        \item Every line is incident with $s + 1$ points.
        \item Every point is incident with $t + 1$ lines.
        \item For any point $p \in P$ and line $l \in  L$ where $(p,l) \notin I$, there is a unique shortest path from $p$ to $l$ of length 3 or 5.
        \end{enumerate}
    \end{defn}

    \begin{rem}\label{incidenceGraph_hex}
        The incidence graph $G_I$ of a generalized hexagon of order $q$ is a $(q+1)-$regular bipartite graph with diameter $6$, girth $12$, and parts corresponding to the set of points $P$ and the set of lines $L$ where $|P|=|L|=(q^3+1)(q^2+q+1)$.
    \end{rem}

\section{Main Results}\label{sec:main}

    In this section, we present our main results. We begin with results on bipartite graphs in \cref{bip_sec}, giving conditions for these graphs to have a radio graceful labeling. In \cref{cages}, we apply these results to explore when bipartite Moore graphs, or cages arising from generalized polygons, are radio graceful. We then address the approximate Moore Graphs, Erd\H{o}s-Rényi polarity graphs and McKay-Miller-\v{S}irá\v{n}, and their complements in \cref{ER_q}. Throughout this section, let $q$ be a prime power.

\subsection{Bipartite graphs}\label{bip_sec}
    The necessary condition for the radio gracefulness of a graph $G$ outlined in \cref{Antipodal} extends to bipartite graphs. Two results follow when $\text{diam}(G)$ is even or $\text{diam}(G) = 3$. 
    
    \begin{thm}\label{bip_even}
        If a bipartite graph $G$ has even diameter then $G$ is not radio graceful.
    \end{thm}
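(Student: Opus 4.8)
The plan is to derive a contradiction from \cref{Antipodal}, which states that a radio graceful graph $G$ must have an antipodal graph $A(G)$ containing a Hamiltonian path. I would show that a bipartite graph of even diameter always has a \emph{disconnected} antipodal graph, and a disconnected graph admits no Hamiltonian path.

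First, let $(X,Y)$ be the bipartition of $G$ and write $\diam(G)=2k$ with $k\ge 1$. In a bipartite graph, any walk joining two vertices of the same part has even length, and any walk joining vertices in different parts has odd length; in particular $d_G(u,v)$ is even precisely when $u$ and $v$ lie in the same part. Since the diameter $2k$ is even, every pair of vertices $u,v$ with $d_G(u,v)=\diam(G)$ must lie in the same part. Equivalently, every edge of $A(G)$ has both endpoints in $X$ or both endpoints in $Y$, and no edge of $A(G)$ runs between $X$ and $Y$.

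Next, because $G$ is connected with $\diam(G)\ge 2$, both $X$ and $Y$ are nonempty. Therefore $A(G)$ is the disjoint union of its restrictions to $X$ and to $Y$ (with no edges between them), so $A(G)$ is disconnected and cannot contain a Hamiltonian path. Applying the contrapositive of \cref{Antipodal} gives that $G$ is not radio graceful, as desired.

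The argument is essentially immediate, so there is no substantial obstacle; the only points that require a word of care are the parity claim for distances in a bipartite graph (clear, since a shortest path alternates between $X$ and $Y$) and the observation that both sides of the bipartition are nonempty, which holds as soon as one assumes the diameter is a genuine positive even integer. The entire content is the recognition that an even diameter forces every antipodal pair to lie on the same side of the bipartition.
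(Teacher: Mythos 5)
Your proposal is correct and follows essentially the same route as the paper: an even diameter forces every antipodal pair into the same side of the bipartition, so $A(G)$ is disconnected, has no Hamiltonian path, and \cref{Antipodal} rules out radio gracefulness. The extra care you take with the parity of distances and the nonemptiness of both parts is fine but not a different argument.
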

    
    \begin{proof}
        For two adjacent vertices $u,v$ in $A(G)$, we have $d_G(u,v)$ is even. Thus, only vertices in the same part are adjacent in $A(G)$, leaving $A(G)$ disconnected. It follows from \cref{Antipodal} that $G$ is not radio graceful.
    \end{proof}

    \begin{thm}\label{TraceBip}
        Let $G$ be a bipartite graph with diameter 3. Then $G$ is radio graceful if and only if $A(G)$ is traceable.
    \end{thm}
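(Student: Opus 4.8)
The plan is to prove both directions by leveraging \cref{Antipodal} for necessity and constructing an explicit radio labeling from a Hamiltonian path of $A(G)$ for sufficiency. The necessity direction is immediate: if $G$ is radio graceful, then \cref{Antipodal} gives that $A(G)$ has a Hamiltonian path, i.e., $A(G)$ is traceable. So the real content lies in the converse, and this is where I expect the main work to be.

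For sufficiency, suppose $A(G)$ has a Hamiltonian path $v_1, v_2, \dots, v_n$ where $n = |V(G)|$, and define $f(v_i) = i$. I need to verify that $f$ satisfies the radio condition $|f(u) - f(v)| + d_G(u,v) \ge \diam(G) + 1 = 4$ for all distinct $u, v$. First consider consecutive vertices $v_i, v_{i+1}$ on the path: since they are adjacent in $A(G)$, we have $d_G(v_i, v_{i+1}) = \diam(G) = 3$, and $|f(v_i) - f(v_{i+1})| = 1$, so the sum is exactly $4$ — the condition holds with equality. For vertices $v_i, v_j$ with $|i - j| \ge 3$, the label difference alone is at least $3$, and since $d_G \ge 1$ always, the condition holds trivially. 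The crucial case is $|i - j| = 2$: here $|f(v_i) - f(v_j)| = 2$, so I need $d_G(v_i, v_j) \ge 2$, i.e., $v_i \not\sim_G v_j$. This is the heart of the proof, and it is exactly where bipartiteness and $\diam(G) = 3$ are used: the vertices $v_i$ and $v_{i+1}$ are adjacent in $A(G)$, so $d_G(v_i, v_{i+1}) = 3$ is odd, meaning $v_i$ and $v_{i+1}$ lie in opposite parts of the bipartition; likewise $v_{i+1}$ and $v_{i+2}$ lie in opposite parts, so $v_i$ and $v_{i+2}$ lie in the \emph{same} part of the bipartition. Since $G$ is bipartite, two vertices in the same part cannot be adjacent, so $d_G(v_i, v_{i+2}) \ge 2$, and the condition $2 + 2 \ge 4$ holds. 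This completes the verification that $f$ is a radio labeling with span $n$, hence $G$ is radio graceful.

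The main obstacle — though it turns out to be mild once the bipartite parity observation is made — is handling the $|i-j| = 2$ case, since a priori two vertices at distance $\diam(G)$ in a general graph need not have any constraint forcing vertices two steps apart on the path to be non-adjacent. The bipartite structure with odd diameter $3$ is precisely what makes consecutive Hamiltonian-path edges alternate between the parts, which is the key leverage. I would also double check the degenerate edge cases (e.g., $n \le 2$, or paths with fewer than three vertices) to ensure the argument is vacuously fine there, but these present no difficulty.
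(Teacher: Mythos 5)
Your proposal is correct and follows essentially the same argument as the paper: necessity via \cref{Antipodal}, and sufficiency by labeling sequentially along a Hamiltonian path of $A(G)$, using the bipartite parity observation that vertices two apart on the path lie in the same part and hence are non-adjacent in $G$. The paper's case analysis is identical (it notes $d_G = 2$ exactly where you note $d_G \ge 2$, which is all that is needed).
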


    \begin{proof}
        In the antipodal graph $A(G)$, $ u \sim_{A(G)} v$ if and only if $d_G(u,v) = 3$. Thus, $A(G)$ is bipartite with the same parts as $G$. Assume $A(G)$ is traceable. Label the vertices of $G$ sequentially according to the Hamiltonian path $H$ in $A(G)$. The distance along the path $d_H(u,v) = |f(u)-f(v)|$. If $|f(u) - f(v)| = d_H(u,v) = 1$, then $d_G(u,v) = 3$. If $|f(u) - f(v)| = d_H(u,v) = 2$, then $u$ and $v$ are in the same part of both $A(G)$ and $G$, so $d_G(u,v) = 2$. If $|f(u) - f(v)| = d_H(u,v) \ge 3$, then $d_G(u,v) \ge 1$ is trivially satisfied. In all cases the radio condition is satisfied implying $G$ is radio graceful. From \cref{Antipodal}, the other direction holds.
    \end{proof}

    From the proof of \cref{TraceBip}, a useful observation can be made.

    \begin{obs}\label{bip_obs}
        Given a bipartite graph with an odd diameter with parts $U$ and $V$, the antipodal graph $A(G)$ is a bipartite graph with the same parts $U$ and $V$. Furthermore, if $\diam(G)=3$ then for $u \in U$ and $v \in V$ we have $u \sim_G v$ or $u \sim_{A(G)} v$, but not both.
    \end{obs}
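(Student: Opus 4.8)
The plan is to read both assertions directly off the parity of distances in a bipartite graph, just as in the proof of \cref{TraceBip}. First I would recall the standard fact that in a connected bipartite graph with parts $U$ and $V$, the distance $d_G(u,v)$ is even exactly when $u$ and $v$ lie in the same part, and odd exactly when they lie in different parts. Since $\diam(G)$ is odd by hypothesis, and two vertices are adjacent in $A(G)$ precisely when $d_G(u,v) = \diam(G)$, every edge of $A(G)$ joins vertices at odd distance in $G$, hence vertices in different parts. Consequently $A(G)$ has no edge inside $U$ and no edge inside $V$, which is exactly the statement that $A(G)$ is bipartite with the same bipartition $(U,V)$ (any vertex that becomes isolated in $A(G)$ may be kept on whichever side it originally belonged to, so this bipartition still witnesses bipartiteness).

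For the second assertion, assume $\diam(G) = 3$ and fix $u \in U$ and $v \in V$. Since $u$ and $v$ lie in different parts, $d_G(u,v)$ is odd, and since $\diam(G) = 3$ it lies in $\{1,3\}$. If $d_G(u,v) = 1$ then $u \sim_G v$, and because $d_G(u,v) \ne 3$ we do not have $u \sim_{A(G)} v$; if $d_G(u,v) = 3$ then $u \sim_{A(G)} v$ by definition, and because $d_G(u,v) \ne 1$ we do not have $u \sim_G v$. In either case exactly one of the two adjacencies holds, which is the claim.

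I do not anticipate a real obstacle here; this is a bookkeeping corollary of the parity argument already used for \cref{TraceBip}. The only points worth stating carefully are that $G$ is implicitly connected (so that $\diam(G)$ and all pairwise distances are well defined, consistent with the running convention for radio labelings), and that "$A(G)$ has the same parts" should be understood up to placement of vertices that are isolated in $A(G)$.
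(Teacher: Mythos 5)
Your proposal is correct and follows essentially the same route as the paper, which derives this observation from the parity argument in the proof of \cref{TraceBip}: odd diameter forces every edge of $A(G)$ to join the two parts, and for $\diam(G)=3$ the distance between $u\in U$ and $v\in V$ is odd and at most $3$, hence exactly one of $u\sim_G v$ or $u\sim_{A(G)} v$ holds. Your explicit remarks about connectivity and isolated vertices are fine but not substantively different from the paper's reasoning.
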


    A natural question to ask is when $A(G)$ is traceable. Introducing the additional property of regularity to $G$ yields sufficient conditions for such, and the radio gracefulness of the graph follows.

    \begin{prop}\label{RegularBip}
        For $1 < r < n/2$, let $G$ be an $r-$regular bipartite graph on $2n$ vertices with diameter 3. Then $G$ is radio graceful.
    \end{prop}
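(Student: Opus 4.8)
The plan is to reduce the statement to \cref{TraceBip} and then to the Moon--Moser corollary \cref{reg-bip-ham}. Since $\diam(G)=3$, \cref{TraceBip} says $G$ is radio graceful if and only if $A(G)$ is traceable, so the whole task is to exhibit a Hamiltonian path in $A(G)$. I would first note that, being $r$-regular and bipartite, $G$ is balanced: writing $U,V$ for its two parts, double-counting edges gives $|U|=|V|=n$.

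Next I would pin down the structure of $A(G)$. Two vertices in the same part of $G$ are at even distance, hence (using $\diam(G)=3$) at distance $0$ or $2$, so they are never adjacent in $A(G)$; and for $u\in U$, $v\in V$ the distance is odd, i.e. $1$ or $3$, so $u\sim_{A(G)}v$ exactly when $u\not\sim_G v$. This is precisely the content of \cref{bip_obs}: $A(G)$ is the bipartite complement of $G$ inside $K_{n,n}$ on parts $U$ and $V$, with no edges inside either part. In particular, since every vertex of $G$ has $r$ neighbours in the opposite part, every vertex of $A(G)$ has exactly $n-r$ neighbours in the opposite part, so $A(G)$ is an $(n-r)$-regular bipartite graph on two parts of size $n$.

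Finally I would feed this into \cref{reg-bip-ham}. The hypothesis $r<n/2$ is equivalent to $n<2(n-r)$, so $A(G)$ is an $(n-r)$-regular bipartite graph on parts of size $n<2(n-r)$, and \cref{reg-bip-ham} yields a Hamiltonian path in $A(G)$. Applying the ``if'' direction of \cref{TraceBip} then gives that $G$ is radio graceful. (The assumption $r>1$ only rules out the degenerate perfect-matching case and is not otherwise needed once $\diam(G)=3$ is assumed.)

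I do not expect a real obstacle here; the proof is essentially an assembly of earlier results. The one point requiring care is the structural identification of $A(G)$ with the bipartite complement of $G$ — verifying that $A(G)$ carries no edges within $U$ or within $V$ and is exactly $(n-r)$-regular — so that the degree condition of \cref{reg-bip-ham} lines up exactly with the hypothesis $r<n/2$.
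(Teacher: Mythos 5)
Your proposal is correct and follows essentially the same route as the paper: invoke \cref{bip_obs} to see that $A(G)$ is the $(n-r)$-regular bipartite complement of $G$ on the same parts, note that $r<n/2$ gives $n<2(n-r)$ so \cref{reg-bip-ham} yields a Hamiltonian path, and conclude via \cref{TraceBip}. Your extra remarks (balancedness via edge-counting, the role of $r>1$) are consistent with, and slightly more explicit than, the paper's argument.
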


    \begin{proof}
        Let $G$ be a graph satisfying the above conditions. By \cref{bip_obs}, the antipodal graph $A(G)$ is a regular bipartite graph with the same parts as $G$ where an edge between parts lies in $G$ or $A(G)$, but not both. For a vertex $v \in V(A(G))$, we have $deg(v) = n-r > \frac{n}{2}$. By \cref{reg-bip-ham}, the graph $A(G)$ is traceable. From \cref{TraceBip} it follows that G is radio graceful.
    \end{proof}

    Employing $\frac{n}{2}$ as a lower bound on $r$ guarantees $\text{diam}(G)=3$, but not the traceability of $A(G)$. Necessary and sufficient conditions for highly connected regular bipartite graphs to be radio graceful are provided.
    
    \begin{lem}\label{BoundBip}
    For $\frac{n}{2} < r < n$, let $G$ be a $r-$regular bipartite graph on $2n$ vertices. Then $G$ has diameter 3.
    \end{lem}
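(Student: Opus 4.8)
The plan is to establish both that every pair of vertices of $G$ is within distance $3$ and that some pair is at distance exactly $3$, so that $\diam(G)=3$. Write $G=(U,V,E)$ with $|U|=|V|=n$, and let $N(\cdot)$ denote neighbourhoods.

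First I would record the elementary counting fact that drives everything. If $x,y$ lie on the same side of the bipartition, say both in $U$, then $N(x),N(y)\subseteq V$ with $|N(x)|=|N(y)|=r$ and $|V|=n$, so by inclusion–exclusion $|N(x)\cap N(y)|\ge 2r-n\ge 1$, using $r>n/2$. Hence any two \emph{distinct} same-side vertices share a common neighbour; since they cannot be adjacent (the graph is bipartite), they are at distance exactly $2$. The symmetric statement holds for pairs in $V$.

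Next I would handle a pair $u\in U$, $v\in V$. If $u\sim v$ then $d(u,v)=1$. Otherwise pick any $a\in N(u)$; then $a\in V$ and $a\neq v$ (else $u\sim v$). By the previous paragraph $a$ and $v$ have a common neighbour $b\in U$, yielding a walk $u\sim a\sim b\sim v$ of length $3$, so $d(u,v)\le 3$. On the other hand $u$ and $v$ lie in opposite parts, so every $u$–$v$ path has odd length and $d(u,v)$ is odd; as $u\not\sim v$ we get $d(u,v)\ge 3$, hence $d(u,v)=3$. Combined with the same-side case, every pair of vertices of $G$ is at distance at most $3$ (so in particular $G$ is connected), and we have exhibited pairs at distance exactly $3$ whenever a non-edge between the parts exists. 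Finally, since $r<n$ the graph is not $K_{n,n}$, so such a non-edge exists and $\diam(G)=3$.

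The main point requiring care — there is no genuine obstacle here — is the bookkeeping in the cross-part case: one must invoke the parity of distances in a bipartite graph to upgrade the walk of length $3$ to the conclusion that the distance is \emph{exactly} $3$ rather than merely at most $3$, and one uses the non-adjacency hypothesis (together with $r<n$) to guarantee that a distance-$3$ pair actually occurs, pinning the diameter down to $3$.
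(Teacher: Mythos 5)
Your proposal is correct and follows essentially the same route as the paper: the common-neighbour count $2r-n\ge 1$ forces same-side pairs to distance $2$, and $r<n$ supplies a non-adjacent cross pair. The only difference is that you explicitly verify the cross-part upper bound $d(u,v)\le 3$ (via a neighbour of $u$ and the same-side case) and the parity argument, details the paper leaves implicit.
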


    \begin{proof}
        Suppose $G$ has parts $U$ and $V$. Since $G$ is regular, we have $|U|=|V|=n$. For any $u, u' \in U$, since $r>n/2$, $u$ and $u'$ share a neighbor in $V$ and $d(u,u') = 2$. Since $r < n$, there exists $v \in V$ such that $u \nsim v$. From this, $G$ has diameter 3.
    \end{proof}

    \begin{thm} \label{n-1Bip}
         Let $n \ge 3$ and $G$ be a bipartite graph on $2n$ vertices. If $G$ is $(n-1)-$regular, then $G$ is not radio graceful. If $G$ is $(n-2)-$regular, then $G$ is radio graceful if and only if $A(G) = C_n$.
    \end{thm}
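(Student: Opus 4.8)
The plan is to deduce both statements from \cref{TraceBip}, which characterises radio gracefulness of a diameter-$3$ bipartite graph by traceability of its antipodal graph; so the first task is to verify $\diam(G)=3$ in the relevant ranges. Since $G$ is regular and bipartite, its parts $U,V$ satisfy $|U|=|V|=n$. For $r=n-1$ one has $n/2<r<n$ exactly when $n\ge 3$, and for $r=n-2$ exactly when $n\ge 5$, so in those ranges \cref{BoundBip} gives $\diam(G)=3$. Whenever $\diam(G)=3$, \cref{bip_obs} shows $A(G)$ is bipartite with the same parts $U,V$, and that between the parts $u\in U$ and $v\in V$ are $A(G)$-adjacent precisely when $u\nsim_G v$; moreover, since two distinct vertices in the same part of $G$ are at distance $2$ (not $3$), $A(G)$ has no edge inside a part. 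Hence, for $r$-regular $G$, the graph $A(G)$ is exactly the bipartite complement of $G$ and is $(n-r)$-regular on the same $2n$ vertices.

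For the $(n-1)$-regular claim with $n\ge 3$: by the above, $A(G)$ is $1$-regular, i.e.\ a disjoint union of $n\ (\ge 3)$ edges, which is disconnected and therefore not traceable; by \cref{TraceBip} (or directly by \cref{Antipodal}), $G$ is not radio graceful.

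For the $(n-2)$-regular claim the main case is $n\ge 5$, where $\diam(G)=3$ and $A(G)$ is $2$-regular and bipartite, hence a vertex-disjoint union of even cycles spanning all $2n$ vertices. Such a union has a Hamiltonian path if and only if it consists of a single cycle (a lone cycle is traceable after deleting one edge, while two or more components cannot be traced together), so $A(G)$ is traceable if and only if $A(G)$ is the single cycle $C_{2n}$; \cref{TraceBip} then yields the asserted equivalence. It remains to settle $n\in\{3,4\}$, where \cref{BoundBip} no longer forces $\diam(G)=3$: for $n=3$ there is no connected $1$-regular graph on $6$ vertices, and for $n=4$ any connected $2$-regular bipartite graph on $8$ vertices is $C_8$, which has even diameter and so is not radio graceful by \cref{bip_even}; in both cases $A(G)$ is not the required cycle, so the equivalence still holds.

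The computations are routine — the degree count for $A(G)$ via \cref{bip_obs}, and the elementary observation that a $2$-regular graph is traceable iff it is a single cycle. The only delicate point, and the main (minor) obstacle, is that \cref{TraceBip} applies only when $\diam(G)=3$; this is precisely why the small cases $n\in\{3,4\}$ of the $(n-2)$-regular claim must be handled separately, either via \cref{bip_even} or by noting that no connected graph of the required form exists.
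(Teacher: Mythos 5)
Your proof is correct and follows essentially the same route as the paper's: diameter $3$ via \cref{BoundBip}, identification of $A(G)$ as the $(n-r)$-regular bipartite complement via \cref{bip_obs}, and the traceability criterion of \cref{TraceBip}, with the $1$-regular case giving $nK_2$ and the $2$-regular case giving a disjoint union of even cycles. You are in fact slightly more careful than the paper on two points: you handle the cases $n\in\{3,4\}$ of the $(n-2)$-regular claim separately (where the hypothesis $r>n/2$ of \cref{BoundBip} fails, a step the paper's ``similar argument as before'' glosses over), and your identification of the connected $2$-regular antipodal graph as $C_{2n}$ rather than $C_n$ corrects what appears to be a typo in the statement and the paper's proof, since $A(G)$ has all $2n$ vertices.
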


    \begin{proof}
        Let $G$ be a regular bipartite graph on $2n$ with parts $U$ and $V$. Suppose $G$ is $(n-1)-$regular. From \cref{BoundBip}, $G$ has diameter 3. As seen in \cref{bip_obs}, for $u \in U$ and $v \in V$, we have $u \sim_{A(G)} v$ when $u \nsim_{G} v $. Thus, $A(G) \cong nK_2$. It follows from \cref{TraceBip} that $G$ is not radio graceful. Now suppose $G$ is $(n-2)-$regular. From a similar argument as before, $G$ has diameter 3 and $A(G)$ is $2-$regular. The antipodal graph $A(G)$ is disconnected if and only if $A(G) \ne C_n$. Therefore, $G$ is radio graceful if and only if $A(G) = C_n$.
    \end{proof}

    \subsection{Cages}\label{cages}

    We now consider three families of cages that arise as incidence graphs of generalized polygons. These graphs can be recognized as bipartite Moore graphs, that is, graphs that attain the Moore bound for even girth.

    \subsubsection{Cages with girth 6}
    The incidence graph $G_I$ of a projective plane of order $q$ is a $(q+1)-$regular graph on $2(q^2+q+1)$ vertices with diameter $3$ and girth $6$, which meets the Moore bound for even girth. Hence $G_I$ is a $(q+1,6)-$cage and, by \cite{SINGLETON1966306}, any $(q+1,6)-$cage can be associated with the incidence graph of a projective plane of order $q$.

\begin{thm}
    Let $G_I$ be the incidence graph of a projective plane of order $q$. Then $G_I$ is radio graceful.
\end{thm}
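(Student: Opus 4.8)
The plan is to apply \cref{TraceBip}: since $G_I$ has diameter $3$, it suffices to show that the antipodal graph $A(G_I)$ is traceable. By \cref{incidenceGraph_plane} and \cref{bip_obs}, $A(G_I)$ is a bipartite graph with parts $P$ and $L$ of size $n = q^2+q+1$ each, and a point $p$ is adjacent to a line $\ell$ in $A(G_I)$ precisely when $p \notin \ell$ in $\mathbb{P}$. Since each point lies on $q+1$ lines, in $A(G_I)$ each vertex has degree $n - (q+1) = q^2$. So $A(G_I)$ is $q^2$-regular bipartite on $2(q^2+q+1)$ vertices.

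The natural first attempt is \cref{reg-bip-ham} (Moon--Moser): an $r$-regular bipartite graph with parts of size $m < 2r$ is traceable. Here $r = q^2$ and $m = q^2+q+1$, and indeed $q^2 + q + 1 < 2q^2$ for all $q \ge 2$. So for $q \ge 2$ the hypothesis of \cref{reg-bip-ham} is satisfied and $A(G_I)$ is traceable, hence $G_I$ is radio graceful by \cref{TraceBip}. I would also check the small case $q = 2$ (the Fano plane / Heawood graph, already illustrated in the excerpt's figures) either by this same count — $4 \cdot q^2 = 16 > 7 = n$... wait, the inequality is $n < 2r$, i.e. $7 < 8$, which holds — or by exhibiting a Hamiltonian path in $A(G_I)$ directly, matching the picture of $A(G_I)$ in Figure~\ref{fig:fano_cage2}.

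I don't anticipate a serious obstacle here; the work is essentially the degree computation $\deg_{A(G_I)}(v) = q^2$ together with verifying the Moon--Moser inequality $q^2 + q + 1 < 2q^2$, i.e. $q^2 - q - 1 > 0$, which holds for every integer $q \ge 2$ (and $q=1$ gives the degenerate projective plane, which we exclude). One subtlety worth a sentence: $A(G_I)$ being regular and bipartite with equal parts is exactly what lets us invoke \cref{reg-bip-ham}, and this is precisely the setup of \cref{RegularBip} — indeed this theorem is really a corollary of \cref{RegularBip} applied to $r = q+1$ and $2n = 2(q^2+q+1)$, provided we check $1 < q+1 < (q^2+q+1)/2$, which again reduces to $q^2 - q - 1 > 0$. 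So the cleanest writeup is: $G_I$ is a $(q+1)$-regular bipartite graph on $2(q^2+q+1)$ vertices with diameter $3$, and since $q+1 < (q^2+q+1)/2$ for $q \ge 2$, \cref{RegularBip} applies directly and gives that $G_I$ is radio graceful.
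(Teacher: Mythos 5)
Your proposal is correct and takes essentially the same route as the paper: the paper likewise computes that $A(G_I)$ is a $q^2$-regular balanced bipartite graph with parts of size $q^2+q+1$, checks $q^2+q+1<2q^2$ for $q\ge 2$, and invokes \cref{reg-bip-ham} followed by \cref{TraceBip}. Your remark that the statement also follows directly from \cref{RegularBip} with $r=q+1$ is a valid (and slightly cleaner) packaging of the same argument.
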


\begin{proof}
    From \cref{incidenceGraph_plane}, the vertices of $G_I$ correspond to the $q^2+q+1$ points and 
    $q^2+q+1$ lines of the projective plane. The graph $G_I$ has diameter $3$, and for any two vertices $u,v$
    \[d(u,v) =
        \begin{cases}
          1 & \text{if point $u$ is incident to line $v$ or point $v$ is incident to line $u$,} \\
          2 & \text{if $u,v$ are both points or are both lines}, \\
          3 & \text{if point $u$ is not incident to line $v$ or point $v$ is not incident to line $u$.}
          \end{cases}\]
    
    Consider the antipodal graph $A(G_I)$ of $G_I$. By definition, the graph $A(G_I)$ has the same vertex set as $G_I$ with its edges incident to a point $p$ and a line $l$ such that $(p,l) \notin I$. Thus, we have the following observations:
    \begin{enumerate}
        \item $A(G_I)$ is a balanced bipartite graph with parts corresponding to the set of points and the set of lines in the projective plane, each of size $q^2+q+1$.
        
        \item $A(G_I)$ is $q^2-$regular: Each point in the projective plane is incident to exactly $q+1$ lines, hence not incident to exactly $q^2+q+1 - (q+1) = q^2$ lines. Similarly, each line is not incident to exactly $q^2$ points. The degree of every vertex in $A(G)$ is $q^2$.
    \end{enumerate}
    
    As $q^2+q+1 < 2q^2$ for $q \geq 2$, by \cref{reg-bip-ham}, the observations show that $A(G_I)$ has a Hamiltonian path. It follows from \cref{TraceBip} that $G_I$ is radio graceful.
\end{proof}

\begin{rem}
    Any $(q+1,6)-$cage is in one-to-one correspondence with the incidence graph of a projective plane of order $q$ \cite{SINGLETON1966306}, so a $(q+1,6)-$cage is radio graceful for any prime power $q$. A radio graceful labeling can be constructed by assigning the vertices corresponding to points with odd numbers $1$ to $2n-1$ and vertices corresponding to lines with even numbers $2$ to $2n$ such that $i$ is not in the neighborhood of $i-1$.
\end{rem}

\subsubsection{Cages with girth 8}

The incidence graph $G_I$ of a generalized quadrangle of order $q$ is the unique $(q+1,8)-$cage \cite{generalized-polygons}, a bipartite $(q+1)-$regular graph on $2(q+1)(q^2+1)$ vertices with diameter $4$ and girth $8$. We have the following result on the radio gracefulness of $(q+1,8)-$cage.

\begin{thm}\label{quad-not-rg}
    Let $G_I$ be the incidence graph of a generalized quadrangle of order $q$. Then $G_I$ is not radio graceful.
\end{thm}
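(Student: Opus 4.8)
The plan is to observe that this statement is an immediate consequence of \cref{bip_even}. By \cref{incidenceGraph_quad}, the incidence graph $G_I$ of a generalized quadrangle of order $q$ is a bipartite graph with diameter $4$. Since $4$ is even, \cref{bip_even} applies verbatim and shows $G_I$ is not radio graceful. So the entire argument reduces to recording the bipartiteness and the (even) diameter, and then citing the earlier theorem.

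If one prefers a self-contained argument rather than invoking \cref{bip_even} as a black box, one would simply unwind its proof in this special case. Let $P$ and $L$ be the parts of $G_I$ corresponding to the points and the lines of the generalized quadrangle. For any two vertices $u,v$ in the same part, $d_{G_I}(u,v)$ is even, while for vertices in different parts $d_{G_I}(u,v)$ is odd. Hence the antipodal graph $A(G_I)$, whose edges join exactly the pairs of vertices at distance $\diam(G_I)=4$, has edges only within $P$ and only within $L$. Thus $A(G_I)$ is disconnected (the nonempty sets $P$ and $L$ separate it), so it contains no Hamiltonian path, and \cref{Antipodal} forces $G_I$ to fail to be radio graceful.

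There is essentially no obstacle here: the only point to verify is that $\diam(G_I)$ is genuinely even, which is precisely the content of \cref{incidenceGraph_quad} (equivalently, condition (3) in the definition of a generalized $n$-gon with $n=4$). The conceptual contrast with the girth-$6$ case is exactly this parity — generalized quadrangles have diameter $4$, whereas projective planes have diameter $3$ — and for bipartite graphs it is the parity of the diameter that governs radio gracefulness.
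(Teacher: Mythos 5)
Your proposal is correct and matches the paper's argument exactly: the paper also derives this theorem immediately from \cref{bip_even}, noting that $G_I$ is bipartite of diameter $4$ by \cref{incidenceGraph_quad}. The optional unwinding of \cref{bip_even} you sketch is likewise consistent with how that theorem is proved in the paper.
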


The proof of \cref{quad-not-rg} follows from \cref{bip_even} as $G_I$ is bipartite and has diameter $4$.

We use the following Lemma to show the existence of the square of a Hamiltonian cycle in each of the components of $A(G_I)$, a fact that we will find useful in determining the radio number of $G_I$.

    \begin{lem}(Fan and H\"{a}ggkvist \cite{square-ham-cycle})\label{ham_sq_exist}
        Let $G$ be a graph with $n$ vertices where $\delta(G) \geq \frac{5}{7}n$. Then $G$ contains the square of a Hamiltonian cycle.
    \end{lem}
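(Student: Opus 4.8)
The plan is to prove the statement in two stages: first produce the square of a Hamiltonian \emph{path} in $G$, and then close that square path into the square of a Hamiltonian \emph{cycle}. Recall that finding the square $P^2$ of a path $P=v_1v_2\cdots v_k$ inside $G$ is the same as finding an ordering of (a subset of) $V(G)$ in which every vertex is adjacent in $G$ to the next two vertices in the ordering; the square of a cycle is the analogous cyclic ordering. I would note up front that the constant $5/7$ is not best possible — Pósa conjectured, and Komlós, Sárközy and Szemerédi proved for large $n$, that $\delta(G)\ge\tfrac{2}{3}n$ already suffices — but that at $5/7$ one can avoid the Regularity and Blow-up Lemmas and argue extremally. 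The workhorse is that the degree bound makes neighbourhoods overlap heavily: for any two vertices $x,y$ we have $|N(x)\cap N(y)|\ge 2\cdot\tfrac{5}{7}n-n=\tfrac{3}{7}n$, and for any three vertices the common neighbourhood has size at least $3\cdot\tfrac{5}{7}n-2n=\tfrac{1}{7}n>0$, so any three vertices have a common neighbour.

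\textbf{Stage 1: a square Hamiltonian path.} Take a longest square path $Q=v_1v_2\cdots v_k$ in $G$ and suppose for contradiction $k<n$. If some $w\notin Q$ were adjacent to both $v_1$ and $v_2$, then $w v_1 v_2\cdots v_k$ would be a longer square path; so no vertex off $Q$ lies in $N(v_1)\cap N(v_2)$, and symmetrically none lies in $N(v_{k-1})\cap N(v_k)$. Next I would use "endpoint rotations": deleting $v_1$ and reinserting it between $v_i$ and $v_{i+1}$ gives another square path on the same vertex set provided $v_1$ is adjacent to each of $v_{i-1},v_i,v_{i+1},v_{i+2}$, and since $N(v_1)$ misses at most $\tfrac{2}{7}n$ vertices there are many admissible positions $i$, each producing a longest square path with a new pair of leading vertices. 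Iterating the rotations generates a large family $\mathcal F$ of ordered pairs each of which can serve as the first two vertices of some longest square path; the density bound then forces some pair in $\mathcal F$ to have a common neighbour off $Q$, contradicting maximality. Hence $k=n$ and $G$ contains a square Hamiltonian path $v_1\cdots v_n$.

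\textbf{Stage 2: closing up.} Given the square Hamiltonian path $v_1\cdots v_n$, it suffices to reorder the vertices so that additionally $v_{n-1}\sim v_1$, $v_n\sim v_1$, and $v_n\sim v_2$, which yields the square of a Hamiltonian cycle. If, say, $v_nv_1\notin E(G)$, then since $d(v_n)+d(v_1)>n$ there is a common neighbour $v_j$ of $v_n$ and $v_1$; choosing $j$ so that $v_{j-1}$ and $v_{j+1}$ also satisfy the required adjacencies (again possible because each relevant neighbourhood misses at most $\tfrac{2}{7}n$ vertices) and "flipping" the segment $v_j\cdots v_n$ splices the path into a cycle. A short case analysis over which of the three closing adjacencies initially fail shows they can all be arranged simultaneously.

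\textbf{Main obstacle.} The genuinely delicate part is Stage 1. Unlike a Dirac-type rotation of an ordinary path, which breaks one edge and adds one, a rotation of a square path is a multi-vertex re-insertion that must respect two adjacency constraints at once, so the bookkeeping — precisely which ``reachable endpoint configurations'' the rotations generate, and a proof that the family $\mathcal F$ is large enough for the $\tfrac{5}{7}n$ bound to force an extension or a closure — is where the real work lies, and where the exact value $5/7$ (rather than a weaker fraction) comes from. By comparison the closing step is routine once a square Hamiltonian path is available.
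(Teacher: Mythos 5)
A point of orientation first: the paper does not prove this lemma --- it is quoted from Fan and H\"{a}ggkvist and the citation carries the entire burden. The nearest in-house analogue is \cref{4thpower}, which proves the weaker bound $\delta(G)\ge\frac{4\ell-1}{4\ell}n$ (so $\frac{7}{8}n$ for squares) by a short extremal argument on a longest $\ell$-th power of a cycle, with no rotations and no endpoint bookkeeping; that simplicity is exactly why it stops at $\frac{7}{8}$ rather than $\frac{5}{7}$. Your proposal instead aims at the true $\frac{5}{7}$ bound via rotation--extension, which is indeed the shape of the original argument, but as written it is a plan rather than a proof: the entire content of the theorem sits in the step you explicitly defer (``the bookkeeping \dots is where the real work lies'').

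Two concrete failures. First, in Stage 1, deleting $v_1$ from the longest square path and reinserting it between $v_i$ and $v_{i+1}$ always yields a path whose new leading pair is $(v_2,v_3)$, independent of $i$; a single rotation therefore does not generate a large family $\mathcal{F}$ of endpoint pairs, and the iteration needed to grow $\mathcal{F}$ --- together with a quantitative lower bound on $|\mathcal{F}|$ strong enough that some pair acquires a common neighbour off the path --- is precisely the long case analysis of the Fan--H\"{a}ggkvist paper. Asserting that ``the density bound then forces'' a contradiction is the conclusion you need, not an argument for it. Second, in Stage 2 the ``flip'' of the segment $v_j\cdots v_n$ is not a legal move as stated: reversal preserves the internal square structure, but the junctions now require $v_{j-1}\sim v_n$, $v_{j-2}\sim v_n$, $v_{j-1}\sim v_{n-1}$, and the closing of the cycle requires $v_j\sim v_1$, $v_j\sim v_2$, $v_{j+1}\sim v_1$; choosing $v_j$ merely as a common neighbour of $v_1$ and $v_n$ arranges almost none of these. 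If you only need a statement sufficient for the application in \cref{gen_quad_anti}, the honest options are to cite Fan--H\"{a}ggkvist as the paper does, or to prove the $\frac{7}{8}n$ version by the longest-special-cycle argument of \cref{4thpower} and verify that $q^3$ still clears that higher threshold for the relevant values of $q$ (it does for $q$ large, but more small cases must then be checked by hand).
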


    Let $G_P$ be the graph induced by vertices corresponding to the set of points $P$ in $A(G_I)$ and let $G_L$ be the graph induced by vertices corresponding to the set of points $L$ in $A(G_I)$.

    \begin{lem}\label{gen_quad_anti}
        The antipodal graph $A(G_I)$ of the incidence graph $G_I$ of a generalized quadrangle of order $q$ is a $q^3-$regular graph with two components $G_P$ and $G_L$. The graphs $G_P$ and $G_L$ both contain the square of a Hamiltonian cycle.
    \end{lem}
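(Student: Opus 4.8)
The plan is to first establish the structural facts about $A(G_I)$ --- that it splits into the two point/line components $G_P$ and $G_L$, each of which is $q^3$-regular on $N := (q+1)(q^2+1)$ vertices --- and then to verify the minimum-degree hypothesis of \cref{ham_sq_exist} for these components. For the structural part, I would argue exactly as in the girth-$6$ case: since $G_I$ is bipartite with diameter $4$, two vertices are at distance $4$ precisely when they lie in the same part (point--point or line--line), so $A(G_I)$ has no edges between $P$ and $L$, giving the two components $G_P$ (on $P$) and $G_L$ (on $L$). To compute the degree, fix a point $p$; the points at distance $4$ from $p$ in $G_I$ are exactly the points $p' \neq p$ that are \emph{not} collinear with $p$. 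Using property (5) of the generalized quadrangle (or the known parameters), each point $p$ is collinear with exactly $(t+1)s = (q+1)q$ other points, so the number of non-collinear points is $(s+1)(st+1) - 1 - s(t+1) = (q+1)(q^2+1) - 1 - q(q+1) = q^3$. Hence $\deg_{G_P}(p) = q^3$, and by the point--line duality of a generalized quadrangle of order $q$ the same count holds in $G_L$; so both components are $q^3$-regular.

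With $N = (q+1)(q^2+1) = q^3 + q^2 + q + 1$, I then need $\delta(G_P) = q^3 \ge \frac{5}{7}N = \frac{5}{7}(q^3+q^2+q+1)$, i.e.\ $7q^3 \ge 5q^3 + 5q^2 + 5q + 5$, i.e.\ $2q^3 \ge 5q^2 + 5q + 5$. This holds for all prime powers $q \ge 3$ (check: $q=3$ gives $54 \ge 65$? --- it fails, so I should double-check), so the inequality may fail for the smallest cases and those will need a separate argument. For $q=3$: $2\cdot27 = 54 < 45+15+5 = 65$, so \cref{ham_sq_exist} does not directly apply; for $q=4$: $128 \ge 80+20+5 = 105$, which is fine, and the inequality is comfortably true for all $q \ge 4$. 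So the main obstacle is the handful of small orders ($q = 2, 3$). For these I would invoke that the generalized quadrangles of order $2$ and $3$ are unique (the $W(2)$/$GQ(2,2)$ and $GQ(3,3)$ incidence geometries), so $G_P$ and $G_L$ are explicit vertex-transitive graphs on $15$ and $40$ vertices respectively, and the square of a Hamiltonian cycle can be exhibited directly (or found by a short computer search); alternatively one can note these complements are strongly regular and appeal to known Hamiltonicity/pancyclicity results for such graphs. I expect this small-case verification to be the only genuinely fiddly part of the argument; everything else is a clean application of the counting above together with \cref{ham_sq_exist}.
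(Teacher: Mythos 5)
Your proposal matches the paper's proof essentially step for step: the same bipartite/diameter-$4$ argument splitting $A(G_I)$ into the two components $G_P$ and $G_L$, the same collinearity count $(q+1)(q^2+1)-1-q(q+1)=q^3$ giving $q^3$-regularity, the same application of \cref{ham_sq_exist} (correctly noting that the $\tfrac{5}{7}$ bound fails for $q=2,3$ and holds for $q\ge 4$), and the same resolution of the small cases by exhibiting explicit squares of Hamiltonian cycles, which the paper does via computer search on the known cage data. The one caveat is that your alternative suggestion of appealing to general Hamiltonicity or pancyclicity results for strongly regular graphs would not suffice for $q=2,3$, since what is needed is the \emph{square} of a Hamiltonian cycle rather than a Hamiltonian cycle.
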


    \begin{proof}
        For $u,v \in V(G)$, we have $d(u,v)=4=\diam(G)$ if either $u$ and $v$ are noncollinear points, or $u$ and $v$ are nonconcurrent lines. Each point $p$ in a generalized quadrangle is collinear with $q(q+1)$ other points, since each point lies on $q+1$ lines and each of these lines is incident with $q$ points other than $p$. Therefore, for each point $p$, there are $(q+1)(q^2+1)-1-q(q+1) = q^3$ points that are not collinear with $p$. The same argument applies to lines, so $A(G_I)$ is a $q^3-$regular graph with two components corresponding to the set of points $P$ and the set of lines $L$. Since $q^3 \geq \frac{5}{7}(q+1)(q^2+1)$ for $q > 3$, \cref{ham_sq_exist} shows the existence of the square of a Hamiltonian cycle both in $G_P$ and $G_L$. 

        For $q=2$ and $q=3$, we can find the square of a Hamiltonian cycle in each bipartite part using edge lists available at \url{https://aeb.win.tue.nl/graphs/cages/cages.html}. When $q=2$, let the parts of $G$ be labeled (0, 1, $\dots$, 14) and (15, 16, $\dots$, 29). The square of a Hamiltonian cycle in each part is (0, 1, 2, 4, 5, 6, 7, 14, 13, 12, 3, 8, 11, 9, 10, 0) and (15, 19, 23, 25, 16, 18, 29, 22, 20, 26, 21, 17, 24, 28, 27, 15). When $q=3$,  let the parts of $G$ be labeled (0, 1, $\dots$, 39) and (39, 40, $\dots$, 79). The square of a Hamiltonian cycle in each part is (0, 1, 2, 3, 5, 7, 8, 6, 9, 10, 12, 11, 13, 22, 4, 14, 16, 17, 15, 18, 19, 21, 20, 24, 26, 25, 23, 27, 28, 31, 29, 30, 33, 34, 35, 32, 36, 37, 39, 38, 0) and (40, 45, 50, 43, 44, 49, 42, 47, 48, 41, 46, 51, 52, 57, 59, 55, 56, 61, 64, 54, 58, 60, 53, 62, 67, 69, 72, 65, 63, 70, 71, 79, 75, 66, 77, 76, 68, 78, 73, 74, 40).
    \end{proof}

We now compute the radio number of a $(q+1,8)-$cage.

\begin{thm}\label{rn-quad}
    Let $G$ be the $(q+1,8)-$cage. Then $rn(G) = 2(q+1)(q^2+1)+1$.
\end{thm}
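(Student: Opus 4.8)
The plan is to establish matching upper and lower bounds on $rn(G)$ for the $(q+1,8)$-cage $G$, which is the incidence graph of a generalized quadrangle of order $q$ on $N = 2(q+1)(q^2+1)$ vertices. For the lower bound $rn(G) \ge N+1$, I would invoke \cref{quad-not-rg}: since $G$ is bipartite of even diameter it is not radio graceful, so no radio labeling has span exactly $N$, and since a radio labeling is injective its span is at least $N$; hence $rn(G) \ge N+1$. The bulk of the work is therefore constructing a radio labeling with span exactly $N+1$, i.e. a labeling using $N+1$ distinct positive integers from $\{1,\dots,N+1\}$ (skipping exactly one value) that satisfies $|f(u)-f(v)| + d(u,v) \ge 5$ for all distinct $u,v$.

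The key construction idea is to exploit the square-of-a-Hamiltonian-cycle structure from \cref{gen_quad_anti}. Recall $A(G_I)$ has two components $G_P$ (on the $(q+1)(q^2+1)$ points) and $G_L$ (on the $(q+1)(q^2+1)$ lines), each containing the square of a Hamiltonian cycle. Write $m = (q+1)(q^2+1)$, so $N = 2m$. Let $p_1, p_2, \dots, p_m$ be the cyclic order of the points along such a Hamiltonian cycle in $G_P$, so that $p_i \sim_{A(G_I)} p_{i+1}$ and $p_i \sim_{A(G_I)} p_{i+2}$ (indices mod $m$); equivalently $d_G(p_i,p_{i+1}) = d_G(p_i,p_{i+2}) = 4$. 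Do the same for the lines $\ell_1,\dots,\ell_m$ in $G_L$. The plan is to define $f(p_i) = i$ for $i = 1,\dots,m$ and $f(\ell_i) = m + 1 + i$ for $i = 1,\dots,m$, so the labels used are $\{1,\dots,m\} \cup \{m+2,\dots,2m+1\}$ — these are $N = 2m$ distinct labels with span $2m+1 = N+1$, exactly one value ($m+1$) skipped, which matches the desired span.

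Then I would verify the radio inequality by cases on the labels of two distinct vertices $u,v$. If both are points, say $u = p_i$, $v = p_j$ with $i < j$: when $j - i = 1$ or $j - i = 2$ (or the wrap-around pairs $\{p_1,p_m\}$, $\{p_1,p_{m-1}\}$, $\{p_2,p_m\}$) the squared-Hamiltonian-cycle property gives $d_G(u,v) = 4$, so $|f(u)-f(v)| + d_G(u,v) \ge 1 + 4 = 5$; when $|i-j| \ge 4$ the label gap alone gives $\ge 4 + 1 = 5$ since $d_G \ge 1$; the only case needing care is $|i-j| = 3$, where the label gap is $3$ and we need $d_G(p_i,p_j) \ge 2$ — but any two distinct points of a generalized quadrangle are noncollinear or collinear, giving $d_G = 4$ or $d_G = 2$, so $d_G \ge 2$ always holds between two points. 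Symmetrically for two lines. If $u$ is a point and $v$ a line, then $|f(u) - f(v)| \ge m + 2 - m = 2$ (the closest cross-pair is $p_m$ with label $m$ and $\ell_1$ with label $m+2$), and $d_G(u,v) \ge 1$ always, so $|f(u)-f(v)| + d_G(u,v) \ge 3$; this is not yet $5$, so this is the step that needs the most care — I would need to check that whenever $|f(u)-f(v)| \in \{2,3\}$ for a point–line pair, the graph distance is at least $5 - |f(u)-f(v)| \ge 2$, equivalently $\ge 3$ (distances in a bipartite graph between opposite parts are odd, so $d_G \ge 3$ automatically once $d_G \neq 1$). So the real requirement reduces to: the at most two line-labels within distance $2$ of a given point's label, and the at most two point-labels within distance $3$ of a given line's label, correspond to non-incident pairs. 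This is a finite constraint on the relative alignment of the two Hamiltonian-cycle orderings, and I would argue it can always be arranged — for instance by choosing the starting vertex / rotation of the line-cycle so that $\ell_1, \ell_2, \ell_3$ are each incident to $p_m$ (or more carefully, so that the specific boundary pairs $\{p_{m-1},\ell_1\}$, $\{p_m,\ell_1\}$, $\{p_m,\ell_2\}$ are all edges of $G_I$, not $A(G_I)$), which is possible because $p_m$ has $q+1 \ge 3$ incident lines and we have complete freedom in rotating the line ordering. I expect this alignment-at-the-seam argument to be the main obstacle: it requires showing there is enough freedom in the two cyclic orderings (rotation, and possibly reflection) to kill the small-gap point–line pairs, and one may need the extra degrees of freedom coming from the fact that each Hamiltonian cycle can be started at any vertex and traversed in either direction, or even that one can insert the single skipped label in a more convenient position than between the point-block and line-block. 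Once the seam is handled, all remaining pairs are covered by the three easy regimes above, completing the construction and hence the theorem.
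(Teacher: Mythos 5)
Your overall skeleton is the same as the paper's: lower bound from \cref{quad-not-rg}, upper bound by labeling the points $1,\dots,m$ along the square of a Hamiltonian cycle in $G_P$ (from \cref{gen_quad_anti}), the lines $m+2,\dots,2m+1$ along the square of a Hamiltonian cycle in $G_L$, and a case analysis on $|f(u)-f(v)|$. You also correctly identify the one nontrivial remaining step, the ``seam'' between the two blocks. But your treatment of that step contains a genuine gap, and in fact a sign error. You first state (correctly) that the boundary pairs $\{p_{m-1},\ell_1\}$, $\{p_m,\ell_1\}$, $\{p_m,\ell_2\}$ must be \emph{non-incident} (so that $d_G=3$ and, e.g., $2+3\ge 5$), but then propose to arrange that $\ell_1,\ell_2,\ell_3$ are \emph{incident} to $p_m$. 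Incidence is fatal: if $p_m\sim\ell_1$ then $|f(p_m)-f(\ell_1)|+d(p_m,\ell_1)=2+1=3<5$. Moreover, the mechanism you offer --- rotate the line cycle and use the fact that $p_m$ has $q+1\ge 3$ incident lines --- does not establish anything even for the intended condition, since a rotation only lets you choose which line is $\ell_1$; the lines $\ell_2,\ell_3$ are then forced by the fixed cyclic order, so you cannot independently prescribe the status of three consecutive lines.

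The missing idea is a counting argument over rotations. For each candidate starting index $t$ of the line cycle, the rotation is bad only if $\ell_t\sim p_{m-1}$, or $\ell_t\sim p_m$, or $\ell_{t+1}\sim p_m$. Each of these three conditions excludes at most $q+1$ values of $t$ (a point lies on exactly $q+1$ lines), so at most $3(q+1)$ rotations are bad; since $3(q+1)<(q+1)(q^2+1)-1=m-1$, a good rotation $t$ exists, and one then labels the lines $\ell_t,\ell_{t+1},\dots,\ell_m,\ell_1,\dots,\ell_{t-1}$ with $m+2,\dots,2m+1$. This is exactly how the paper closes the seam. With that pigeonhole step inserted (and the incidence/non-incidence condition stated the right way around), the rest of your case analysis goes through as written.
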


\begin{proof}
    Since $G$ is the incidence graph of a generalized quadrangle of order $q$, we have the natural partition of its vertices to parts $P$ and $L$, corresponding to the set of points and the set of lines, respectively. Let $n=|P|=|L| = (q+1)(q^2+1)$. 

    From \cref{quad-not-rg} we have $G$ is not radio graceful so $rn(G) \ge 2n+1$. We give a radio labeling $f: V(G) \rightarrow \{1,2,\dots, 2n+1\}$, showing this lower bound can be achieved. We first show a consecutive labeling exists for both $P$ and $L$, then verify the existence of a gluing strategy, all of which satisfy the radio labeling condition (see \cref{radio_condition}).
 
    Consider the subgraph $G_P$ of $A(G)$ induced by the vertex set $P = \{p_1, p_2, \dots, p_n\}$. By \cref{incidenceGraph_quad}, the distance between vertices of $P$ can only be $2$ or $4$, so the radio labeling condition is satisfied only when $d(p_i,p_{i+1}) = 4$ for $i = 1,2,\dots,n-1$ and $d(p_i,p_{i+2}) = 4$ for $i = 1,2,\dots,n-2$, which can be rewritten as $p_{i+1}$ is adjacent to $p_i$ and $p_{i+2}$ in the antipodal graph $A(G)$. Similarly for the subgraph $G_L$ of $A(G)$ induced by the vertex set $L = \{l_1, l_2, \dots, l_n\}$. Then our goal is to find a Hamiltonian path in both $G_P$ and $G_L$ such that any vertex is adjacent to every vertex within distance $2$ along the path, that is, the square of the Hamiltonian path. By \cref{gen_quad_anti}, both $G_P$ and $G_L$ contain the square of a Hamiltonian cycle, say $(p_1,p_2,\dots,p_n,p_1)$ and $(l_1,l_2,\dots,l_n,l_1)$, respectively. Thus, there exists a consecutive labeling for both $P$ and $L$ that satisfies the radio labeling condition. 
    
    In the graph $G$, consider the vertices $p_{n-1}$ and $p_n$. We will show that there exists an index $t \in \{1,2,\dots,n-1\}$ such that $l_t$ is not adjacent to $p_{n-1}$ and $p_n$, and $l_{t+1}$ is not adjacent to $p_n$, allowing us to glue the consecutive labelings of both $P$ and $L$. There are $q+1$ possible indices $t$ such that $l_t$ is adjacent to $p_{n-1}$, $q+1$ possible indices $t$ such that $l_t$ is adjacent to $p_{n}$, and $q+1$ possible indices $t$ such that $l_{t+1}$ is adjacent to $p_{n}$. Hence there are at most $3(q+1)<(q+1)(q^2+1) - 1 = n-1$ such indices that violate these conditions and so there exists an index $t \in \{1,2,\dots,n-1\}$ such that $l_t$ is not adjacent to $p_{n-1}$ and $p_n$, and $l_{t+1}$ is not adjacent to $p_n$.
    
    We define the following labeling:
    \begin{table}[H]
        \begin{adjustbox}{width=0.85\columnwidth,center}
        \begin{tabular}{|c|c|c|c|c|c||c|c|c|c|c|c|c|}
            \hline
            $v$ & $p_1$ & $p_2$ & $\dots$ & $p_{n-1}$ & $p_n$ & $l_t$ & $l_{t+1}$ & $\dots$ & $l_n$ & $l_1$ & $\dots$ & $l_{t-1}$ \\
            \hline
            $f(v)$ & $1$ & $2$ & $\dots$ & $n-1$ & $n$ & $n+2$ & $n+3$ & $\dots$ & $2n-t+2$ & $2n-t+3$ & $\dots$ & $2n+1$ \\
            \hline
        \end{tabular}
        \end{adjustbox}
    \end{table}
    
    This is indeed a radio labeling since 
    \begin{itemize}
        \item if $|f(u)-f(v)|=1$ then $\{u,v\}$ is $\{p_i,p_{i+1}\}$ or $\{l_i,l_{i+1}\}$, which have distance 4. Hence $|f(u)-f(v)| + d(u,v)=5 = \diam(G)+1$
        \item if $|f(u)-f(v)|=2$ then $\{u,v\}$ is $\{p_i,p_{i+2}\}$, or $\{l_i,l_{i+2}\}$, or $\{p_n,l_t\}$. We have $d(p_i,p_{i+2}) = d(l_i,l_{i+2}) = 4$ and $d(p_n,l_t)=3$ since $p_n$ is not neighbor of $l_t$. Hence $|f(u)-f(v)| + d(u,v)\geq 2+3 = 5 = \diam(G)+1$.
        \item if $|f(u)-f(v)|=3$ then $\{u,v\}$ is $\{p_i,p_{i+3}\}$, or $\{l_i,l_{i+3}\}$, or $\{p_n,l_{t+1}\}$, or $\{p_{n-1},l_{t}\}$. We have $d(p_i,p_{i+3}) \geq 2$, $d(l_i,l_{i+3}) \geq 2$, and $d(p_n,l_{t+1})=d(p_{n-1},l_{t})=3$. Hence $|f(u)-f(v)| + d(u,v)\geq 3+2 = 5 = \diam(G)+1$.
        \item if $|f(u)-f(v)|\geq 4$ then $|f(u)-f(v)| + d(u,v)\geq 4+1 = 5 = \diam(G)+1$.
    \end{itemize}
    
    Thus, we have found a radio labeling whose span is $2n+1$, so $rn(G) \le 2n+1$. Therefore, $rn(G) = 2n+1 = 2(q+1)(q^2+1)+1$.

\end{proof}
\begin{rem}
    Unlike the $(q+1,6)-$cage that is radio graceful, the $(q+1,8)-$cage is not radio graceful because of its even diameter. However, we observe that the $(q+1,8)-$cage is almost radio graceful, as the minimum labeling span only expands by $1$ beyond the graceful labeling span.
\end{rem}
\subsubsection{Cages with girth 12}    

The incidence graph $G_I$ of a generalized hexagon of order $q$ is the unique $(q+1,12)-$cage \cite{generalized-polygons}, a bipartite $(q+1)-$regular graph on $2(q^3+1)(q^2+q+1)$ vertices with diameter $6$ and girth $12$. We have the following result on the radio gracefulness of $(q+1,12)-$cage.  

\begin{thm}\label{hex_not_rg}
    Let $G_I$ be the incidence graph of a generalized hexagon of order $q$. Then $G_I$ is not radio graceful.
\end{thm}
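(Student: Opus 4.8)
The plan is to reduce immediately to \cref{bip_even}, exactly as was done for the girth-$8$ case in \cref{quad-not-rg}. By \cref{incidenceGraph_hex}, the incidence graph $G_I$ of a generalized hexagon of order $q$ is a $(q+1)$-regular \emph{bipartite} graph with diameter $6$. Since $6$ is even, \cref{bip_even} applies verbatim and yields that $G_I$ is not radio graceful.

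If one prefers to see the mechanism unpacked rather than cited: in the antipodal graph $A(G_I)$, any edge $uv$ satisfies $d_{G_I}(u,v) = \diam(G_I) = 6$, an even number, so $u$ and $v$ necessarily lie in the same part of the bipartition of $G_I$. Hence $A(G_I)$ contains no edge crossing between the point-part $P$ and the line-part $L$, so $A(G_I)$ is disconnected and admits no Hamiltonian path; \cref{Antipodal} then rules out radio gracefulness. Either way the argument is a one-liner, so there is no genuine obstacle here — the only thing to be careful about is to invoke the diameter value from \cref{incidenceGraph_hex} rather than conflating it with the girth $12$. (It is worth noting that, in contrast to the girth-$8$ case where \cref{rn-quad} pins down $rn(G_I)$ exactly, determining $rn(G_I)$ for the generalized hexagon would require a square-of-a-Hamiltonian-path type analysis of the components of $A(G_I)$ under the diameter-$6$ distance structure, which is not needed for the present non-gracefulness claim.)
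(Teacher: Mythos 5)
Your proposal is correct and matches the paper exactly: the paper likewise disposes of this theorem in one line by noting that $G_I$ is bipartite with even diameter $6$ and invoking \cref{bip_even}. The unpacked version you give is just the proof of \cref{bip_even} itself, so nothing further is needed.
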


The proof of \cref{hex_not_rg} follows from \cref{bip_even} as $G_I$ is bipartite and has diameter $6$.

    The following Proposition is used to show the existence of the $4^{th}$ power of a Hamiltonian cycle in each of the components of $A(G_I)$, a fact that we will find useful in determining the radio number of $G_I$. While we only use it for the $4^{th}$ power of a Hamiltonian cycle, we state it for the general $\ell^{th}$ power of a Hamiltonian cycle.

    \begin{prop}\label{4thpower}
        Let $G$ be a graph with $n$ vertices where $\delta(G) \geq \frac{4\ell-1}{4\ell}n$. Then $G$ contains the $\ell^{th}$ power of a Hamiltonian cycle.
    \end{prop}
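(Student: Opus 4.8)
The plan is to build the $\ell$-th power of a Hamiltonian cycle greedily, exploiting the fact that a minimum-degree hypothesis this strong makes the common neighbourhood of any small vertex set huge. Since $|V(G)\setminus N(v)| \le n-\delta(G) \le \tfrac{n}{4\ell}$ for every vertex $v$, any $T\subseteq V(G)$ has common neighbourhood $N^*(T)\defeq\bigcap_{v\in T}N(v)$ of size at least $n-|T|\cdot\tfrac{n}{4\ell}$; in particular $|N^*(T)|\ge\tfrac{3n}{4}$ when $|T|\le\ell$ and $|N^*(T)|\ge\tfrac n2$ when $|T|\le 2\ell$. If $n<8\ell$ then $\delta(G)\ge\tfrac{4\ell-1}{4\ell}n>n-2$ forces $\delta(G)=n-1$, so $G=K_n$ and there is nothing to prove; hence assume $n\ge 8\ell$.

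The engine is two moves on a power of a path $P=u_1u_2\cdots u_m$ (meaning $u_iu_j\in E(G)$ whenever $|i-j|\le\ell$). \emph{Extension}: if $m<\tfrac{3n}{4}$ one may append a new $u_{m+1}\in N^*(\{u_{m-\ell+1},\dots,u_m\})\setminus V(P)$, since that set has at least $\tfrac{3n}{4}>m$ elements, and the result is again a power of a path. \emph{Insertion}: for $x\notin V(P)$, inserting $x$ between $u_j$ and $u_{j+1}$ yields a power of a path exactly when $x$ is adjacent to $u_{j-\ell+1},\dots,u_j$ and to $u_{j+1},\dots,u_{j+\ell}$ -- every other required adjacency is inherited, because inserting a vertex only lengthens old distances. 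Each non-neighbour of $x$ forbids a window of $2\ell$ of the $m+1$ available slots, so at most $\tfrac{n}{4\ell}\cdot 2\ell=\tfrac n2$ slots are forbidden, and once $m\ge\tfrac n2$ some slot survives. First I would run \emph{Extension} from an arbitrary starting vertex until $\lceil n/4\rceil$ vertices remain (possible, and then $m\ge\tfrac n2$), then apply \emph{Insertion} repeatedly, absorbing the leftover vertices one at a time -- the path only grows, so a slot is always available. This already produces the $\ell$-th power of a Hamiltonian \emph{path}.

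To get a cycle it suffices to arrange that the resulting power-of-path $v_1\cdots v_n$ has $\{v_1,\dots,v_\ell\}\cup\{v_{n-\ell+1},\dots,v_n\}$ a clique: the first $\ell$ and last $\ell$ vertices of a power of a path are automatically cliques, and the pairs at cyclic distance $\le\ell$ not already forced by the path are precisely the pairs between these two $\ell$-sets, so the cycle closes. I would fix an $\ell$-clique $S$ as $v_1,\dots,v_\ell$ and try to force the last $\ell$ vertices into $N^*(S)$, so that each of them is adjacent to all of $S$. The obstruction is the ``defective'' set $W\defeq V(G)\setminus(S\cup N^*(S))$ of vertices missing some vertex of $S$; $S$ should be chosen to keep $|W|$ small, and $W$ splits into few cliques of $G$ (greedily colour $\overline{G[W]}$, which has maximum degree below $\tfrac{n}{4\ell}$, then refine oversized classes into blocks of size $\le\ell$). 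The scheme is then to lay all of $W$ near the front of the path as clique-blocks joined by short clique ``bridges'' produced with \emph{Extension}, and afterwards run \emph{Extension} and \emph{Insertion} on the remainder of the graph -- all of which now lies in $N^*(S)$ -- so that no defective vertex can reach the last $\ell$ positions, whence the cycle closes.

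The hard part, where essentially all of the work lies and where the precise constant $\tfrac{4\ell-1}{4\ell}$ is used, is this closing step: the greedy moves give no control over which vertices land in the final $\ell$ positions, so one must actively quarantine the defective vertices toward the front while always keeping enough unused common neighbours in hand to realise the next bridge. Since a bridge must attach to two consecutive clique-blocks simultaneously, it is the binding constraint, and balancing the total number of bridges against the available slack is a delicate calculation. (A more robust alternative for the endgame is an absorption/reservoir argument: set aside at the outset a small vertex set meeting every neighbourhood in roughly the right proportion, and use it both to splice path-segments and to swallow the last few leftovers, at the cost of a longer preliminary setup.) The Hamiltonian-path statement, by contrast, drops straight out of the two moves once $n\ge 8\ell$.
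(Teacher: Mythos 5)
Your extension/insertion engine does correctly yield the $\ell^{th}$ power of a Hamiltonian \emph{path} (the counting is right: the common neighbourhood of the last $\ell$ vertices has size at least $\tfrac{3n}{4}$, and each non-neighbour of an unabsorbed vertex kills at most $2\ell$ of the $m+1$ insertion slots). But the statement asks for the power of a Hamiltonian \emph{cycle}, and that is exactly the part you leave undone. Your quarantine-and-bridges scheme for forcing the last $\ell$ vertices into $N^*(S)$ is only a plan: you do not bound the number of defective vertices, the number of clique blocks or bridges, or verify that the insertion phase can be run entirely inside $N^*(S)$ without ever needing to place a vertex near the tail; you yourself flag the bridge-counting as ``a delicate calculation'' you have not carried out. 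The alternative absorption idea is likewise only named, not executed. As written, the proposal proves a weaker statement than the proposition.

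The paper's proof avoids the closing problem entirely by arguing about cycles from the start. Take a \emph{maximum} $\ell^{th}$ power of a cycle $C$ on $k$ vertices (one exists, since any triangle qualifies) and suppose $k<n$. Two counting bounds follow from maximality: every vertex outside $C$ must have a non-neighbour among the $2\ell$ vertices straddling any fixed point of $C$, giving $n-k\le 2\ell(n-1-d)$; and for a fixed outside vertex $u$, every window of $2\ell$ consecutive cycle vertices must contain a non-neighbour of $u$ (else $u$ could be spliced in), giving $k\le 2\ell(n-1-d)$. Summing yields $n\le 4\ell(n-1-d)$, contradicting $d\ge\tfrac{4\ell-1}{4\ell}n$. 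Because insertion into a cycle only increases cyclic distances, no seam ever needs to be closed. If you want to salvage your constructive route, the cleanest fix is to replace your path-building by this maximal-cycle double count; otherwise you must actually carry out the endgame you sketched, which is where all the difficulty lives.
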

    
    \begin{proof}
        Let $G$ be an $n-$vertex graph with minimum degree $d = \delta(G) \ge \frac{4\ell-1}{4\ell}n$. We say a cycle is \emph{special} if it is the $\ell^{th}$ power of a cycle.
        Suppose the longest special cycle has $k$ vertices. We will prove that $k = n$ by contradiction. Assume that $k<n$ and $C=(v_1,v_2,\dots,v_k,v_1)$ is a special cycle. Then any vertex $u$ outside of this cycle is not adjacent to at least one of the $2\ell$ vertices $v_{k-\ell+1},v_{k-\ell+2},\dots,v_k,v_1,v_2,\dots,v_\ell$, otherwise the cycle could be extended contradicting maximality. Each vertex in $G$ has at most $n-1-d$ non-neighbors, so the number of vertices not adjacent to at least one of these $2\ell$ vertices is at most $2\ell(n-1-d)$. Therefore, the number of vertices outside of the cycle $C$ is
        \begin{equation}\label{eq1}
            n-k \leq 2\ell(n-1-d).
        \end{equation}
        
        Let $u$ be a vertex of $G$ that is not part of this cycle. Then for each index $i$, we have $u$ is not adjacent to at least one of the vertices $(v_i,v_{i+1},v_{i+2},\dots,v_{i+2\ell-1})$ (let $v_{n+j} = v_{j}$ when $j>0$), otherwise $(v_1,v_2,\dots,v_{i+2}, v_{i+\ell-1}, u, v_{i+\ell}, v_{i+\ell+1},\dots,v_k,v_1)$ is a special cycle of length $k+1>k$, a contradiction. The number of indices $i$ such that $u$ is not adjacent to $v_i$ is at most $n-1-d$. Similarly for $v_{i+1},v_{i+2},\dots,v_{i+2\ell-1}$, we thus have the number of indices $i$ such that $u$ is not adjacent to at least one of the vertices $(v_i,v_{i+1},v_{i+2},\dots,v_{i+2\ell-1})$ is at most $2\ell(n-1-d)$. Since all indices $i = 1, 2, \dots, k$ have to satisfy this condition, we must have
        \begin{equation}\label{eq2}
            k \leq 2\ell(n-1-d)
        \end{equation}
        From (\ref{eq1}) and (\ref{eq2}), we have $n \leq 4\ell(n-1-d)$, which contradicts $d \geq \frac{4\ell-1}{4\ell}n$.
    \end{proof}

    Let $G_P$ be the graph induced by vertices corresponding to the set of points $P$ in $A(G_I)$ and let $G_L$ be the graph induced by vertices corresponding to the set of points $L$ in $A(G_I)$.

    \begin{lem}\label{gen_hex_anti}
        The antipodal graph $A(G_I)$ of the incidence graph $G_I$ of a generalized hexagon of order $q$ is a $q^5-$regular bipartite graph with two components $G_P$ and $G_L$. For $q > 15$, the graphs $G_P$ and $G_L$ both contain the $4^{th}$ power of a Hamiltonian cycle.
    \end{lem}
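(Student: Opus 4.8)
The plan is to follow the template of the girth-$8$ case (\cref{gen_quad_anti}), with \cref{ham_sq_exist} replaced by \cref{4thpower} specialised to $\ell=4$. Three things must be established: that $A(G_I)$ is $q^{5}$-regular, that it is the disjoint union of $G_P$ and $G_L$, and that each of $G_P,G_L$ meets the minimum-degree hypothesis of \cref{4thpower} as soon as $q>15$.

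First I would compute the degree. Fix a point $p$ of the generalized hexagon. Because $G_I$ has girth $12$, the ball of radius $5$ about $p$ in $G_I$ induces a tree (a standard consequence of the girth being at least $2\cdot 5+2=12$). Reading this tree off level by level, and using that in $G_I$ each line is incident with $q+1$ points and each point with $q+1$ lines, the numbers of vertices at distances $0,1,2,3,4,5$ from $p$ are
\[
1,\quad q+1,\quad q(q+1),\quad q^{2}(q+1),\quad q^{3}(q+1),\quad q^{4}(q+1).
\]
Two things follow. First, the lines at distances $1,3,5$ already exhaust $L$, since $(q+1)(1+q^{2}+q^{4})=(q^{3}+1)(q^{2}+q+1)=|L|$ by \cref{incidenceGraph_hex}; so no line lies at distance $6$ from $p$, and hence $A(G_I)$ has no edge between $P$ and $L$. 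Second, the number of points at distance $6$ from $p$ is
\[
|P|-\bigl(1+q(q+1)+q^{3}(q+1)\bigr)=(q^{3}+1)(q^{2}+q+1)-\bigl(q^{4}+q^{3}+q^{2}+q+1\bigr)=q^{5}.
\]
By the point–line duality of a generalized hexagon of order $q$ — or by rerunning the same count starting from a line and using $|L|=|P|$ — every line likewise has exactly $q^{5}$ vertices at distance $6$. Hence $A(G_I)$ is $q^{5}$-regular, and it is the disjoint union of the induced subgraph $G_P$ on the $|P|=(q^{3}+1)(q^{2}+q+1)$ points and the induced subgraph $G_L$ on the $|L|$ lines.

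Finally I would apply \cref{4thpower} to $G_P$, and identically to $G_L$, with $\ell=4$ and $n=(q^{3}+1)(q^{2}+q+1)=q^{5}+q^{4}+q^{3}+q^{2}+q+1$. The required inequality $\delta(G_P)=q^{5}\ge\frac{15}{16}n$ rearranges to $q^{5}\ge 15(q^{4}+q^{3}+q^{2}+q+1)$, and for $q\ge 16$ one has $15(q^{4}+q^{3}+q^{2}+q+1)<15q^{4}\cdot\frac{q}{q-1}\le 16q^{4}\le q^{5}$, so the hypothesis holds precisely in the claimed range $q>15$. \cref{4thpower} then yields the $4^{\text{th}}$ power of a Hamiltonian cycle in each of $G_P$ and $G_L$ (in particular each is connected, so these are indeed the two components of $A(G_I)$), which completes the proof.

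I do not expect a genuine obstacle here: the argument is a direct analogue of \cref{gen_quad_anti}. The only points needing care are (a) invoking the ``girth $\Rightarrow$ tree'' fact at radius $5$ and transcribing the level sizes without arithmetic slips, and (b) verifying that $q>15$ is exactly the range in which $q^{5}\ge\frac{15}{16}n$ holds. Unlike the girth-$8$ situation, the remaining (prime-power) values $q\le 13$ lie outside the reach of this method and are simply not asserted in the lemma; covering them would require either a sharper power-of-Hamilton-cycle theorem or an explicit construction, as was done by computer for $q=2,3$ in \cref{gen_quad_anti}.
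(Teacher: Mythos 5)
Your proposal is correct and follows essentially the same route as the paper: count the vertices at each distance $0$ through $5$ from a fixed vertex using the girth-$12$ (tree) structure, conclude that exactly $q^{5}$ vertices lie at distance $6$ and all of them are on the same side of the bipartition, and then invoke \cref{4thpower} with $\ell=4$ after checking $q^{5}\ge\frac{15}{16}(q^{3}+1)(q^{2}+q+1)$ for $q>15$. Your explicit verification that the lines at distances $1,3,5$ already exhaust $L$ is a slightly more careful justification of the two-component claim than the paper gives, but it is the same argument in substance.
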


    \begin{proof}
        Let $\Gamma_i(v)$ be the set of vertices that are distance $i$ from $v$ in $G_I$. Each point $v$ has exactly $q+1$ neighbors, so $|\Gamma_1(v)| = q+1$. Each neighbor of $v$ has $q$ neighbors other than $v$, and no two neighbors $u_1,u_2$ of $v$ have the same neighbor $w \neq v$, otherwise resulting in two irreducible paths of length $2$ from $v$ to $w$ in $G_I$. Therefore, $|\Gamma_2(v)|= (q+1)q$. Similarly, we must have $|\Gamma_3(v)|= (q+1)q^2$, $|\Gamma_4(v)|= (q+1)q^3$, and $|\Gamma_5(v)|= (q+1)q^4$. Then, $$|\Gamma_6(v)|= 2(q^3+1)(q^2+q+1) - (1 + |\Gamma_1(v)|+ |\Gamma_2(v)|+ |\Gamma_3(v)|+ |\Gamma_4(v)|+ |\Gamma_5(v)|) = q^5.$$
        Hence, each vertex $v$ in $G$ has $q^5$ vertices distance 6 from it, so $A(G_I)$ is a $q^5-$regular graph with two components corresponding to the set of points $P$ and the set of lines $L$. Since $q^5 \geq \frac{15}{16}(q^3+1)(q^2+q+1)$ for $q > 15$, \cref{4thpower} for $\ell=4$ shows the existence of the $4^{th}$ power of a Hamiltonian cycle in both $G_P$ and $G_L$.
    \end{proof}

\begin{thm}
    Let $G$ be the $(q+1,12)-$cage where $q > 15$. Then $rn(G) = 2(q^3+1)(q^2+q+1)+1$.
\end{thm}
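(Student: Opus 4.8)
The plan is to follow the template of \cref{rn-quad}, adapted from diameter $4$ to diameter $6$. Write $n = |P| = |L| = (q^3+1)(q^2+q+1)$ for the point- and line-classes $P,L$ of the bipartite graph $G$. Inside a class the $G$-distances lie in $\{2,4,6\}$, across the classes in $\{1,3,5\}$, and the radio condition reads $|f(u)-f(v)| + d(u,v) \ge 7$. The lower bound $rn(G) \ge 2n+1$ is immediate from \cref{hex_not_rg}: $G$ is bipartite of even diameter, hence not radio graceful. It remains to exhibit a radio labeling of span $2n+1$.

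For the bulk of the labeling I would invoke \cref{gen_hex_anti}: since $q > 15$, each component $G_P, G_L$ of $A(G)$ carries the $4^{\text{th}}$ power of a Hamiltonian cycle, say $(p_1, \dots, p_n, p_1)$ and $(l_1, \dots, l_n, l_1)$, so that two vertices of the same class at cyclic index-distance at most $4$ are adjacent in $A(G)$, i.e.\ at $G$-distance $6$. Setting $f(p_i) = i$ then verifies the radio condition on every pair inside $P$: a label gap of at most $4$ comes with $G$-distance $6$ (and $6 + 1 \ge 7$), a gap of exactly $5$ comes with $G$-distance at least $2$ (automatic inside a class, and $5 + 2 = 7$), and a gap of at least $6$ is trivial. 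I would label $L$ consecutively around its own cycle by $f(l_{t+k}) = n + 2 + k$ (indices read mod $n$) for a start index $t$ to be chosen, so that $L$ receives the labels $n+2, \dots, 2n+1$; the cyclicity of the $4^{\text{th}}$-power property makes the same three-case check go through inside $L$, while the unused value $n+1$ is what pushes the span up to $2n+1$.

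The one genuinely new ingredient is the gluing, i.e.\ the choice of $t$. Since $f(p_i) = i \le n$ and $f(l_{t+k}) = n + 2 + k$, the only cross pairs with label gap below $6$ are $\{p_i, l_{t+k}\}$ with $i \ge n - 3 + k$ (in particular $k \le 3$), and a short case analysis shows these demand $d_G(p_n, l_t) = 5$ (gap $2$ needs distance $\ge 5$, and $\{1,3,5\}$ leaves only $5$), $d_G(p_{n-1}, l_t) = d_G(p_n, l_{t+1}) = 5$ (gap $3$ needs distance $\ge 4$), and non-incidence of $l_t$ with $p_{n-2}$ and $p_{n-3}$, of $l_{t+1}$ with $p_{n-1}$ and $p_{n-2}$, of $l_{t+2}$ with $p_n$ and $p_{n-1}$, and of $l_{t+3}$ with $p_n$ (gaps $4$ and $5$ need distance $\ge 3$, resp.\ $\ge 2$, i.e.\ just non-incidence). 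To produce a good $t$ I would count forbidden indices: the proof of \cref{gen_hex_anti} gives $|\Gamma_1(p)| = q+1$ and $|\Gamma_3(p)| = (q+1)q^2$, so for a fixed point $p$ at most $(q+1)(q^2+1)$ lines fail to lie at $G$-distance $5$ from it; hence each of the three ``distance $= 5$'' demands forbids at most $(q+1)(q^2+1)$ values of $t$, and each of the seven non-incidence demands at most $q+1$ values. The total $3(q+1)(q^2+1) + 7(q+1)$ lies far below $n-1$ for $q > 15$, so a valid $t$ exists; recording the resulting labeling in a table as in \cref{rn-quad} and checking the four label-gap cases then gives $rn(G) \le 2n+1$, whence $rn(G) = 2n+1 = 2(q^3+1)(q^2+q+1)+1$.

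The main obstacle, as in the girth-$8$ case but with somewhat more bookkeeping, is organizing the ``seam'' correctly --- in particular noticing that for label gaps $2$ and $3$ the cross-class distance must be \emph{exactly} $5$, not merely positive, which is the feature distinguishing diameter $6$ from diameter $4$. Once that is in place, the counting argument producing the gluing index is routine, since $q^5$ dwarfs the $O(q^3)$ bound on bad indices.
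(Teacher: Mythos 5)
Your proposal follows the paper's proof essentially verbatim: the lower bound from \cref{hex_not_rg}, consecutive labelings of $P$ and $L$ read off the $4^{\text{th}}$ power of a Hamiltonian cycle supplied by \cref{gen_hex_anti}, and the same ten seam conditions on a gluing index $t$ found by counting forbidden indices, yielding span $2n+1$. If anything, your count of $3(q+1)(q^2+1)+7(q+1)$ bad indices is more careful than the paper's stated bound of $5(q+1)$, since the ``distance exactly $5$'' demands exclude lines at distance $1$ or $3$ and genuinely cost $(q+1)(q^2+1)$ each, not merely $q+1$.
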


\begin{proof}
   Since $G$ is the incidence graph of a generalized hexagon of order $q$, we can divide its vertex set into 2 parts $P$ and $L$, corresponding to the set of points and the set of lines, respectively. Let $n = |P|=|L| = (q^3+1)(q^2+q+1)$
   
Since $G$ is not radio graceful by \cref{hex_not_rg}, we have $rn(G) \geq |V(G)|+1 = 2(q^3+1)(q^2+q+1)+1 = 2n+1$. We will prove that this lower bound can occur. Following the same argument as in the proof of \cref{rn-quad}, we give a radio labeling $f: V(G) \rightarrow \{1,2,\dots, 2n+1\}$ by showing the existence of a consecutive labeling for both $P$ and $L$ then a gluing strategy, all of which satisfy the radio labeling condition (see \cref{radio_condition}). It is enough to find an ordering of the vertices of $P = \{p_1,p_2,\dots,p_n\}$ such that $d(p_i,p_{i+1}) = 6$ for $i = 1,2,\dots,n-1$, $d(p_i,p_{i+2}) = 6$ for $i = 1,2,\dots,n-2$, $d(p_i,p_{i+3}) = 6$ for $i = 1,2,\dots,n-3$, and $d(p_i,p_{i+4}) = 6$ for $i = 1,2,\dots,n-4$. Similarly for the vertices of $L=\{l_1, \dots, l_n\}$. 

Consider the subgraphs $G_P$ induced by the set of points $P$ and $G_L$ induced by the set of lines $L$. Our goal is to find a Hamiltonian path in both $G_P$ and $G_L$ such that any vertex is adjacent to every vertex within distance $4$ along the cycle, or the $4^{th}$ power of a Hamiltonian cycle. By \cref{gen_hex_anti}, both $G_P$ and $G_L$ both contain the $4^{th}$ power of a Hamiltonian cycle, say $(p_1,p_2,\dots,p_n,p_1)$ and $(l_1,l_2,\dots,l_n,l_1)$, respectively. Thus, there exists a consecutive labeling for both $P$ and $L$ that satisfies the radio labeling condition.

    Hence there are at most $5(q+1)<(q^3+1)(q^2+q+1) - 1 = n-1$ such indices that violate these conditions and so there exists an index $t \in \{1,2,\dots,n-1\}$ such that $l_t$ is not adjacent to $p_{n-1}$ and $p_n$,
    
    In the graph $G$, by a similar argument as in the proof of \cref{rn-quad}, there are at most $5(q+1)<(q^3+1)(q^2+q+1) - 1 = n-1$ (as $q>2$) indices $t$ such that $l_t$ is not adjacent to $p_{n-1}$ and $p_n$. Thus, there exists an index $t \in \{1,2,\dots,n-1\}$ such that $d(l_t,p_n) \geq 5$, $d(l_t,p_{n-1}) \geq 4$, $d(l_t,p_{n-2}) \geq 3$ , $d(l_t,p_{n-3}) \geq 2$, $d(l_{t+1}, p_{n})\geq 4$, $d(l_{t+1}, p_{n-1})\geq 3$, $d(l_{t+1}, p_{n-2})\geq 2$, $d(l_{t+2}, p_{n})\geq 3$, $d(l_{t+2}, p_{n-1})\geq 2$, and $d(l_{t+3}, p_{n})\geq 2$. Then we define the following radio labeling:
    \begin{table}[H]
        \begin{adjustbox}{width=0.85\columnwidth,center}
        \begin{tabular}{|c|c|c|c|c|c||c|c|c|c|c|c|c|}
            \hline
            $v$ & $p_1$ & $p_2$ & $\dots$ & $p_{n-1}$ & $p_n$ & $l_t$ & $l_{t+1}$ & $\dots$ & $l_n$ & $l_1$ & $\dots$ & $l_{t-1}$ \\
            \hline
            $f(v)$ & $1$ & $2$ & $\dots$ & $n-1$ & $n$ & $n+2$ & $n+3$ & $\dots$ & $2n-t+2$ & $2n-t+3$ & $\dots$ & $2n+1$ \\
            \hline
        \end{tabular}
        \end{adjustbox}
    \end{table}
    
    Thus, we have found a radio labeling whose span is $2n+1$, so $rn(G) \le 2n+1$. Therefore, $rn(G) = 2n+1 = 2(q^3+1)(q^2+q+1)+1$.
\end{proof}

\subsection{Approximate-Moore graphs and their complements}\label{ER_q}

The radio gracefulness of well-known approximate Moore graphs is now considered. Throughout this section, let $q$ be a prime power. The Erd\H{o}s-Rényi polarity graph (or Brown graph) \( ER_q \) is defined as follows.

\begin{defn}
Let \( PG(2,q) \) denote the projective plane over \( \mathbb{F}_q \), whose points are equivalence classes of nonzero vectors \( (x,y,z) \in \mathbb{F}_q^3 \setminus \{0\} \) under scalar multiplication. Define \( ER_q \) to be the graph whose vertex set is the set of all points of \( PG(2,q) \) where an edge between two vertices \( (x_0,y_0,z_0) \) and \( (x_1,y_1,z_1) \) exists if and only if $x_0x_1 + y_0y_1 + z_0z_1 = 0 \text{ in } \mathbb{F}_q$.
\end{defn}

We now recall key structural properties of these graphs.

\begin{lem}[Bachraty and Siran \cite{siran}]
\label{erq-properties}
Let \( ER_q \) be the Erd\H{o}s-Rényi polarity graph of order \( q \). Then:
\begin{enumerate}
    \item \( |V(ER_q)| = q^2 + q + 1 \),
    \item The vertex set can be partitioned into \( q+1 \) \emph{quadric} vertices of degree \( q \), and the remaining \( q^2 \) vertices of degree \( q+1 \),
    \item \( ER_q \) has diameter 2,
    \item For adjacent vertices \( u \sim v \), there is at most one common neighbor.
\end{enumerate}
\end{lem}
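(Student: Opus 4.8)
The plan is to carry out everything inside the projective plane $PG(2,q)$, reading the adjacency of $ER_q$ through the standard polarity $\pi$ that sends a point $v=[a:b:c]$ to the line $v^{\perp}$ with equation $ax+by+cz=0$. The first step is to record the basic counts: $PG(2,q)$ has exactly $(q^{3}-1)/(q-1)=q^{2}+q+1$ points (and the same number of lines), and every line carries exactly $q+1$ points; this gives item (1) at once. Since the bilinear form $B\big((x_{0},y_{0},z_{0}),(x_{1},y_{1},z_{1})\big)=x_{0}x_{1}+y_{0}y_{1}+z_{0}z_{1}$ is symmetric and behaves correctly under rescaling of representatives, adjacency is well defined, and the neighbourhood of $v$ in $ER_{q}$ is precisely the point set of $v^{\perp}$ with $v$ itself removed (the graph is simple, so loops are discarded).

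For item (2) the key dichotomy is whether $v$ lies on its own polar line, i.e. whether $B(v,v)=a^{2}+b^{2}+c^{2}=0$. If $v\notin v^{\perp}$ then all $q+1$ points of $v^{\perp}$ are distinct from $v$, so $\deg(v)=q+1$; if $v\in v^{\perp}$ then exactly one of those points (namely $v$) is excluded, so $\deg(v)=q$. It then remains to count the \emph{absolute} points, those with $B(v,v)=0$: for odd $q$ the quadric $x^{2}+y^{2}+z^{2}=0$ is a nondegenerate conic and so has exactly $q+1$ points (I would either cite the classification of conics in $PG(2,q)$ or parametrize the conic by a projective line), while for even $q$ the absolute points form a line, still of size $q+1$, so the conclusion is unchanged. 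This yields the partition into $q+1$ quadric vertices of degree $q$ and $q^{2}$ vertices of degree $q+1$.

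Items (3) and (4) both rest on one projective fact — two distinct lines of $PG(2,q)$ meet in a unique point — together with the bijectivity of $\pi$, so that $u^{\perp}=v^{\perp}$ forces $u=v$. Given distinct vertices $u\neq v$, the lines $u^{\perp}$ and $v^{\perp}$ are distinct and meet in a unique point $w$. If $u\not\sim v$, then $w\neq u$ and $w\neq v$ — for instance $w=u$ would put $u$ on $v^{\perp}$, i.e. $u\sim v$ — so $w$ is a genuine common neighbour and $d(u,v)=2$; since $ER_{q}$ is not complete ($\Delta=q+1<q^{2}+q$ for $q\ge 2$) the diameter is exactly $2$, giving item (3). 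If instead $u\sim v$, any common neighbour of $u$ and $v$ must again be the unique intersection point of $u^{\perp}$ and $v^{\perp}$, hence there is at most one, giving item (4); the borderline case is when $u$ or $v$ is absolute, and then that intersection point coincides with the absolute vertex, so there is no common neighbour at all.

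The main obstacle is less a genuine difficulty than a bookkeeping one: the consistent handling of loops at absolute points. One must remember throughout that a point on its own polar line contributes a deleted loop, and track that exclusion simultaneously in the degree count for (2), in ruling out $w\in\{u,v\}$ for (3), and in the at-most-one claim for (4). The only honest computation is the count of $q+1$ points on the conic $x^{2}+y^{2}+z^{2}=0$; beyond that, everything reduces to standard incidence facts about $PG(2,q)$ that can be quoted.
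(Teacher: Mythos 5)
The paper does not prove this lemma at all --- it is imported verbatim from Bachrat\'y and \v{S}ir\'a\v{n} \cite{siran} as a black box --- so there is no in-paper argument to compare yours against. Your self-contained derivation via the orthogonal polarity of $PG(2,q)$ is the standard one and is correct: item (1) is the point count of $PG(2,q)$; item (2) follows from the observation that $N(v)=v^{\perp}\setminus\{v\}$ together with the count of absolute points (the conic $x^{2}+y^{2}+z^{2}=0$ has $q+1$ points for odd $q$, and for even $q$ the form degenerates to $(x+y+z)^{2}$ so the absolute points form a line, again of size $q+1$); items (3) and (4) both reduce to the fact that two distinct lines of $PG(2,q)$ meet in exactly one point, combined with injectivity of the polarity to guarantee $u^{\perp}\neq v^{\perp}$. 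You also correctly flag the one place where care is needed, namely that the unique point of $u^{\perp}\cap v^{\perp}$ may coincide with $u$ or $v$, and that this happens precisely when the relevant vertex is absolute --- which is why the intersection point is a genuine common neighbour in the non-adjacent case (giving diameter $2$) but may fail to be one in the adjacent case (giving ``at most one'' rather than ``exactly one'' in item (4)). The only ingredient you quote rather than prove is that a nondegenerate conic in $PG(2,q)$ has exactly $q+1$ points; citing that, or parametrizing the conic by a projective line after producing one point via Chevalley--Warning, closes the argument.
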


We use these facts to prove Erd\H{o}s-Rényi polarity graphs are radio graceful.

\begin{lem}
\label{erq-ham-path}
For all $q \ge 2$, the graph \( \overline{ER_q} \) has a Hamiltonian path.
\end{lem}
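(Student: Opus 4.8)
The plan is to show that the complement $\overline{ER_q}$ has minimum degree large enough to invoke Dirac's theorem (\cref{Dirac}). By \cref{erq-properties}, the graph $ER_q$ has $n = q^2+q+1$ vertices, with every vertex having degree either $q$ or $q+1$. Hence in $\overline{ER_q}$ every vertex has degree at least $n-1-(q+1) = q^2 - 1$. To apply \cref{Dirac} it suffices to check that $q^2 - 1 \ge \frac{n-1}{2} = \frac{q^2+q}{2}$, i.e. that $2q^2 - 2 \ge q^2 + q$, i.e. $q^2 - q - 2 = (q-2)(q+1) \ge 0$. This holds for all $q \ge 2$, so $\overline{ER_q}$ has a Hamiltonian path for all $q \ge 2$, as claimed.

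The main (and essentially only) obstacle is the small-$q$ boundary: the inequality $(q-2)(q+1)\ge 0$ is tight at $q=2$, where $q^2-1 = 3$ and $\frac{n-1}{2} = 3$, so Dirac applies but only just. For $q \ge 3$ there is slack. One should double-check that $\overline{ER_q}$ is connected (immediate once a Hamiltonian path is produced, so no separate argument is needed) and that $q \ge 2$ is indeed the intended range; the degree bound from \cref{erq-properties}(2) is exactly what makes the computation go through with nothing to spare at $q=2$.

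An alternative, in case one wants a self-contained bound not relying on Dirac, would be to use the more refined structure in \cref{erq-properties}(4) (adjacent vertices share at most one common neighbor) to directly build a Hamiltonian path greedily, but this is unnecessary: the degree count above already suffices. I would therefore present the short computation with \cref{Dirac} as the complete proof.
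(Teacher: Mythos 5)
Your proposal is correct and follows exactly the same route as the paper: bound the minimum degree of $\overline{ER_q}$ below by $q^2+q+1-1-(q+1)=q^2-1$ using \cref{erq-properties}, check $q^2-1\ge\frac{q^2+q}{2}$ for $q\ge 2$, and apply \cref{Dirac}. Your additional observation that the inequality is tight at $q=2$ is accurate but does not change the argument.
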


\begin{proof}
Since \( ER_q \) has maximum degree \( q+1 \), every vertex in \( \overline{ER_q} \) has a degree at least $q^2 + q + 1 - 1 - (q+1) = q^2 - 1$. The inequality $q^2 - 1 \geq \frac{q^2 + q}{2}$ holds for all \( q \geq 2 \) so by \cref{Dirac}, it follows that \( \overline{ER_q} \) has a Hamiltonian path.

\end{proof}

\begin{thm}
\label{erq-radio}
For all \( q \geq 2 \), the Erd\H{o}s-Rényi polarity graph \( ER_q \) is radio graceful.
\end{thm}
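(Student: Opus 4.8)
The plan is straightforward: combine the diameter-2 characterization with the Hamiltonian path just established. By \cref{erq-properties}, the graph $ER_q$ has diameter $2$, so \cref{Antipodal_2} applies: $ER_q$ is radio graceful if and only if its antipodal graph $A(ER_q)$ has a Hamiltonian path. Since $\diam(ER_q) = 2$, we have $A(ER_q) = \overline{ER_q}$ by the definition of the antipodal graph. By \cref{erq-ham-path}, $\overline{ER_q}$ has a Hamiltonian path for all $q \ge 2$. Therefore $A(ER_q) = \overline{ER_q}$ has a Hamiltonian path, and \cref{Antipodal_2} yields that $ER_q$ is radio graceful.

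There is essentially no obstacle here, since all the work has been front-loaded into \cref{erq-ham-path} (the Dirac degree estimate on $\overline{ER_q}$) and into the cited characterizations. The only point requiring a moment's care is the identification $A(ER_q) = \overline{ER_q}$, which is immediate from the remark in the definition of the antipodal graph that $A(G) = \overline{G}$ whenever $\diam(G) = 2$; item (3) of \cref{erq-properties} supplies exactly that hypothesis. One might optionally remark, as the paper does elsewhere for the cages, that an explicit graceful labeling is obtained by ordering $V(ER_q)$ along the Hamiltonian path of $\overline{ER_q}$ and assigning labels $1, 2, \dots, q^2+q+1$ in that order: consecutive labels correspond to non-adjacent vertices (distance $2$), so $|f(u)-f(v)| + d(u,v) \ge 1 + 2 = 3 = \diam(ER_q)+1$, and any pair with $|f(u)-f(v)| \ge 2$ trivially satisfies the radio condition. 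This makes the labeling concrete, but for the theorem as stated the three-line deduction above suffices.
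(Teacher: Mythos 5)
Your proposal is correct and follows exactly the same route as the paper's proof: cite \cref{erq-properties} for $\diam(ER_q)=2$, cite \cref{erq-ham-path} for the Hamiltonian path in $\overline{ER_q}=A(ER_q)$, and conclude via \cref{Antipodal_2}. The optional explicit-labeling remark is a harmless addition but not needed.
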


\begin{proof}
By \cref{erq-properties}, \( \operatorname{diam}(ER_q) = 2 \). By \cref{erq-ham-path}, \( \overline{ER_q} \) has a Hamiltonian path. From \cref{Antipodal_2}, we have \( ER_q \) is radio graceful.
\end{proof}

\cref{erq-radio} is an example of the following result which holds more generally for diameter $2$ graphs with bounded degree, which follows from \cref{Antipodal_2} and \cref{Dirac}.

\begin{porism}
\label{porism:diam2}
Let $G$ be a graph of diameter $2$ with $n$ vertices. If $\Delta(G) \leq \frac{n-1}{2}$, then $G$ is radio graceful.
\end{porism}

We can apply this result to show that McKay-Miller-\v{S}irá\v{n} graphs (or MMS graphs) are radio graceful. We first give a definition of the MMS graph $H_q$.

\begin{defn}
    Let $q>2$, and let $\xi$ be a primitive element of the finite field $\mathbb{F}_q$. Let $X = \{1, \xi^2,\dots\}$ and $X' = \{\xi, \xi^3, \dots\}$ be the subsets of $\mathbb{F}_q$ as defined in \cite{HAFNER}. Let $H_q$ be the MMS graph. The vertex set of $H_q$ is $\mathbb{Z}_2 \times \mathbb{F}_q \times \mathbb{F}_q$ with edges between given by:

    \begin{itemize}
        \item $(0,x,y)$ is adjacent to $(0,x,y')$ if and only if $y-y' \in X$;
        \item $(1,m,c)$ is adjacent to $(1,m,c')$ if and only if $c-c' \in X'$;
        \item $(0,x,y)$ is adjacent to $(1,m,c)$ if and only if $y=mx+c$.
    \end{itemize}
\end{defn}

\begin{lem}[Mckay, Miller, \v{S}irá\v{n} \cite{MMS}]\label{MMS_lemma}
    Every MMS graph corresponds to an odd integer $d \equiv 1 \pmod{6}$ such that $\frac{2d+1}{3}$ is prime. The corresponding MMS graph is of degree $d$ and order
    \[
    n = \frac{8}{9} \left(d + \frac{1}{2} \right)^2.
    \]
\end{lem}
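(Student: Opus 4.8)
The plan is to read off all three assertions of the lemma — the degree $d$, the order $n$, and the arithmetic characterization relating them — directly from the definition of $H_q$, together with one elementary identity; the only step with real content is the degree count, and everything else is bookkeeping. The order is immediate: the vertex set is $\mathbb{Z}_2\times\mathbb{F}_q\times\mathbb{F}_q$, so $|V(H_q)|=2q^2$.

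For the degree I would argue as follows. The set $X$ of nonzero squares and the set $X'$ of nonsquares in $\mathbb{F}_q$ each have size $\tfrac{q-1}{2}$, and the two within-layer adjacency relations are symmetric — so that $H_q$ is genuinely a simple graph — precisely when $-1$ is a square in $\mathbb{F}_q$, i.e. when $q\equiv1\pmod4$; I restrict to that case, and note that it is in any event forced by the stated conclusion $d\equiv1\pmod6$. The neighbours of a vertex $(0,x,y)$ are then: within its layer, the $|X|=\tfrac{q-1}{2}$ vertices $(0,x,y')$ with $y-y'\in X$; and across layers, for each slope $m\in\mathbb{F}_q$ the unique vertex $(1,m,c)$ with $c=y-mx$, of which there are $q$. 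Hence $(0,x,y)$ has degree $\tfrac{q-1}{2}+q=\tfrac{3q-1}{2}$. The mirror-image count for a vertex $(1,m,c)$ — the $\tfrac{q-1}{2}$ vertices $(1,m,c')$ with $c-c'\in X'$, together with the vertex $(0,x,mx+c)$ for each $x\in\mathbb{F}_q$ — again yields $\tfrac{3q-1}{2}$, so $H_q$ is $d$-regular with $d=\tfrac{3q-1}{2}$.

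It remains to match the parameters arithmetically. Writing $q=4k+1$ gives $d=6k+1$, which is odd and satisfies $d\equiv1\pmod6$, while $\tfrac{2d+1}{3}=\tfrac{12k+3}{3}=4k+1=q$; taking $q$ prime (as in the lemma) this quantity is prime, whereas for a general prime power it would only be a prime power. Conversely, an odd $d\equiv1\pmod6$ with $q:=\tfrac{2d+1}{3}$ prime forces $q=4k+1\equiv1\pmod4$, so $H_q$ is defined and, by the count above, $d$-regular. Finally $2d+1=2\bigl(d+\tfrac12\bigr)$, so $(2d+1)^2=4\bigl(d+\tfrac12\bigr)^2$ and
\[
n=|V(H_q)|=2q^2=2\Bigl(\tfrac{2d+1}{3}\Bigr)^2=\frac{2(2d+1)^2}{9}=\frac{8}{9}\Bigl(d+\tfrac12\Bigr)^2 .
\]
I do not anticipate a genuine obstacle; the single point that needs care is the well-definedness and regularity of $H_q$, namely checking that the two within-layer relations are symmetric — this is exactly what quietly pins down $q\equiv1\pmod4$, equivalently $d\equiv1\pmod6$, so that the clean statement of the lemma holds.
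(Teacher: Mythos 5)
Your derivation is correct, but note that the paper does not prove this lemma at all: it is quoted verbatim from McKay, Miller and \v{S}ir\'a\v{n} \cite{MMS}, so there is no internal proof to compare against. What you have written is a sound verification of the cited parameters from the paper's own definition of $H_q$: the order $2q^2$ is immediate, the degree count $\tfrac{q-1}{2}+q=\tfrac{3q-1}{2}$ is right (each layer contributes $|X|=|X'|=\tfrac{q-1}{2}$ within-column neighbours and $q$ cross-layer neighbours), and the algebra $q=\tfrac{2d+1}{3}$, $d\equiv 1\pmod 6 \iff q\equiv 1\pmod 4$, and $2q^2=\tfrac{8}{9}\bigl(d+\tfrac12\bigr)^2$ all checks out. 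You also correctly flag the two genuine subtleties: symmetry of the within-layer relations forces $-1$ to be a square, i.e.\ $q\equiv 1\pmod 4$, and the lemma's ``prime'' (rather than ``prime power'') reflects the original MMS construction rather than Hafner's later generalization, which is the source of the definition the paper actually states. So your argument is a legitimate standalone justification of a statement the paper only cites.
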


\begin{cor}[of \cref{porism:diam2}]
Let $G$ be a MMS graph. Then $G$ is radio graceful.
\end{cor}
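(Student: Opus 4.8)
The plan is to derive this immediately from \cref{porism:diam2}, so all I need to check is that an MMS graph $G$ satisfies the two hypotheses of that porism: $\operatorname{diam}(G) = 2$, and $\Delta(G) \le \frac{n-1}{2}$ where $n = |V(G)|$. The first is part of the definition of the MMS graphs (they are vertex-transitive graphs of diameter $2$, as recalled in the introduction), so the only real content is the degree estimate, and even that will turn out to be slack.

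First I would read off from \cref{MMS_lemma} that $G$ is $d$-regular on $n = \tfrac{8}{9}\bigl(d+\tfrac12\bigr)^2$ vertices, so $\Delta(G) = d$, and that $d$ is an odd integer with $d \equiv 1 \pmod 6$ and $\tfrac{2d+1}{3}$ prime. The congruence and primality constraints rule out $d = 1$ (which gives $\tfrac{2\cdot 1 + 1}{3} = 1$, not prime), so in fact $d \ge 7$ for every MMS graph. Next I would verify $d \le \frac{n-1}{2}$: writing $m = 2d+1$ we have $n = \tfrac{2m^2}{9}$, and the inequality $2d+1 \le n$ becomes $m \le \tfrac{2m^2}{9}$, i.e. $m \ge \tfrac92$ (equivalently $d \ge 2$), which holds with room to spare since $d \ge 7$; the slightly stronger $d \le \frac{n-1}{2}$ unwinds to the same condition $m \ge \tfrac92$. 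Hence $\Delta(G) = d \le \frac{n-1}{2}$.

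With $\operatorname{diam}(G) = 2$ and $\Delta(G) \le \frac{n-1}{2}$ established, \cref{porism:diam2} gives directly that $G$ is radio graceful. I do not expect a genuine obstacle here; the only point worth a moment's care is confirming that the degree of an MMS graph is never small enough to break the porism's hypothesis, and the congruence/primality conditions in \cref{MMS_lemma} take care of that.
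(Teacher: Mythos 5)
Your proposal is correct and follows essentially the same route as the paper: invoke \cref{porism:diam2} using that MMS graphs have diameter $2$, then verify $\Delta(G)=d\le\frac{n-1}{2}$ from the order formula $n=\frac{8}{9}\bigl(d+\frac12\bigr)^2$ of \cref{MMS_lemma}. Your extra remark that the congruence and primality conditions force $d\ge 7$ is a slightly more explicit justification of the final step than the paper gives, but the argument is the same.
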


\begin{proof}
By \cref{porism:diam2}, as MMS graphs have diameter 2, to show $G$ is radio graceful it suffices to show that
\[
d \leq \frac{n - 1}{2}.
\]
Equivalently,
\[
d + \frac{1}{2} \leq \frac{n}{2} = \frac{4}{9} \left(d + \frac{1}{2} \right)^2,
\]
where the equality follows from \cref{MMS_lemma}. This inequality holds for all $d \geq 7$, and so it holds when $d \equiv 1 \pmod{6}$ is an odd integer such that $\frac{2d+1}{3}$ is prime.
\end{proof}

A sort of converse of \cref{porism:diam2} is also known:

\begin{thm}[Saha and Basunia \cite{saha2020}]
    \label{complement}
    Let G be an n-vertex graph with $\Delta(G) < \frac{n}{2}$ . If G is traceable, then the complement graph $\overline{G}$ is always radio graceful. 
\end{thm}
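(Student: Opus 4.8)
The plan is to reduce the whole statement to \cref{Antipodal_2} applied to $\overline{G}$, for which the only genuine work is pinning down that $\diam(\overline{G}) = 2$.

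First I would translate the degree hypothesis to the complement. Since $\Delta(G) < n/2$ and degrees are integers, $\Delta(G) \le \lceil n/2\rceil - 1$, so every vertex of $\overline{G}$ has degree at least $n - 1 - (\lceil n/2 \rceil - 1) = \lfloor n/2 \rfloor \ge \tfrac{n-1}{2}$. From $\delta(\overline{G}) \ge \tfrac{n-1}{2}$ two standard facts follow: $\overline{G}$ is connected (two components would force at least $2(\lfloor n/2\rfloor + 1) > n$ vertices), and any two non-adjacent vertices $u,v$ of $\overline{G}$ have a common neighbour in $\overline{G}$ — the vertices failing this are exactly those in $N_G(u)\cup N_G(v)$, and since $u\sim_G v$ this set has size at most $\deg_G(u)+\deg_G(v) \le 2(\lceil n/2\rceil - 1) < n$. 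Hence $\diam(\overline{G}) \le 2$.

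Next I would exclude $\diam(\overline{G}) = 1$. The cases $n \le 2$ are trivial or vacuous (for $n = 2$ the hypothesis forces $G$ edgeless, hence not traceable). For $n \ge 3$, traceability of $G$ gives $G$ at least one edge, so $\overline{G} \ne K_n$ and therefore $\diam(\overline{G}) = 2$. With this in hand, the definition of the antipodal graph gives $A(\overline{G}) = \overline{\overline{G}} = G$. By hypothesis $G$ is traceable, so $A(\overline{G})$ has a Hamiltonian path, and \cref{Antipodal_2} yields that $\overline{G}$ is radio graceful.

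The only step needing care — and hence the (mild) main obstacle — is upgrading $\diam(\overline{G}) \le 2$ to $\diam(\overline{G}) = 2$: the upper bound is purely a degree computation, but the lower bound genuinely invokes the traceability hypothesis (through $G$ possessing an edge) to rule out the degenerate complete-graph case, after which $A(\overline{G}) = G$ and the theorem is immediate.
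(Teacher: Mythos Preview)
Your argument is correct. However, the paper does not supply its own proof of this statement: \cref{complement} is quoted verbatim as a result of Saha and Basunia and then used as a black box, so there is nothing in the paper to compare against. Your route --- showing $\diam(\overline{G})=2$ from the degree bound (the pigeonhole count on $N_G(u)\cup N_G(v)$) and then applying \cref{Antipodal_2} with $A(\overline{G})=G$ --- is the natural one and almost certainly what the cited source does as well.
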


Let $T_{m,n}$ be the tadpole graph, the graph constructed by adding an edge between an endpoint of $P_n$ and a vertex in $C_m$. From \cref{complement}, we can obtain three infinite families of radio graceful graphs from low-degree traceable graphs.
    
\begin{example}
    The graphs $\overline{C_m}$, $\overline{P_n}$, and $\overline{T_{m,n}}$ are radio graceful for $m\geq5$, $n\geq 5$, and $m+n\geq7$, respectively.
\end{example}

One natural corollary of \cref{complement} is as follows:

\begin{cor}
    For $n \ge 5$, let $G$ be an $\frac{n-1}{2}-$regular graph on $n$ vertices. Then $G$ and $\overline{G}$ are radio graceful.
\end{cor}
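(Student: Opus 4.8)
The plan is to exploit the symmetry of the hypothesis: since $G$ is $\frac{n-1}{2}-$regular, $n$ must be odd, and $\overline{G}$ is then also $\frac{n-1}{2}-$regular. So it suffices to prove that an $\frac{n-1}{2}-$regular graph on $n \ge 5$ vertices is radio graceful, and apply that fact to both $G$ and $\overline{G}$.

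The first step is to check that every $\frac{n-1}{2}-$regular graph $H$ on $n \ge 5$ vertices has diameter exactly $2$. Since the common degree $\frac{n-1}{2}$ is strictly less than $n-1$, the graph $H$ is not complete, so $\diam(H) \ge 2$. Conversely, for any non-adjacent pair $u,v$ both $N(u)$ and $N(v)$ are contained in $V(H)\setminus\{u,v\}$, a set of size $n-2$, while $|N(u)|+|N(v)| = n-1 > n-2$; hence $N(u)\cap N(v)\neq\varnothing$ and $d(u,v)=2$. Thus $\diam(H)=2$. The second step is then immediate from \cref{porism:diam2}: $H$ has diameter $2$ and $\Delta(H)=\frac{n-1}{2}\le\frac{n-1}{2}$, so $H$ is radio graceful. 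Applying this with $H=G$ and again with $H=\overline{G}$ (which is $\frac{n-1}{2}-$regular, hence also of diameter $2$ by the same computation) yields the claim.

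There is essentially no obstacle here — the statement is a direct corollary. The only point needing care is the elementary neighborhood-counting argument that an $\frac{n-1}{2}-$regular graph on $n\ge 5$ vertices has diameter $2$ (which also guarantees it is not complete, so diameter is not $1$); everything else is a bookkeeping application of \cref{porism:diam2} to the pair $G,\overline{G}$. An equivalent route, if one prefers, is to note that the minimum degree of $G$ and of $\overline{G}$ is exactly $\frac{n-1}{2}$, so \cref{Dirac} makes both traceable, and then \cref{complement} (with $\Delta=\frac{n-1}{2}<\frac n2$) gives that $\overline{G}$ is radio graceful, while swapping the roles of $G$ and $\overline{G}$ gives that $G$ is radio graceful.
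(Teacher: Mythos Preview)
Your proof is correct. Your primary route (verify $\diam=2$ by the neighborhood count, then invoke \cref{porism:diam2}) differs from the paper's, which is precisely the alternative you sketch at the end: since both $G$ and $\overline{G}$ are $\frac{n-1}{2}$-regular, \cref{Dirac} makes each traceable, and then \cref{complement} applied to each gives that its complement---i.e., the other graph---is radio graceful. Your primary approach trades the citation of \cref{complement} for a short pigeonhole argument establishing diameter $2$; the paper's approach avoids that computation entirely by appealing to traceability instead. Both are equally short, and since you already note the paper's route as an equivalent option, there is no substantive gap.
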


\begin{proof}
    First, note that the complement of an $\frac{n-1}{2}-$regular graph is also an $\frac{n-1}{2}-$regular graph. By \cref{Dirac} both have Hamiltonian paths. Hence, by \cref{complement} their complements, which are each other, are both radio graceful.
\end{proof}

We wish to apply \cref{complement} to polarity graphs. To show that polarity graphs are traceable, we introduce an equivalent construction using Singer difference sets. A \emph{Singer difference set} $D$ is a set of $q+1$ elements of $\mathbb{Z}_{q^2+q+1}$ such that the set $\{(d_i - d_j) \pmod{q^2+q+1} \ | \ d_i, d_j \in D \text{ and } d_i \neq d_j \}$ is the set of all integers from $1$ to $q^2+q$ without repetition. 

\begin{defn}\cite{singer}\label{Sq_def}
    Let $D = \{d_0, d_1, \dots, d_{q+1}\}$ be the Singer difference set $D$ over $\mathbb{Z}_{q^2+q+1}$. The Singer graph $S_q$ has vertices $V = \{i \ | \ 0  \le i < q^2+q+1\}$, and edges $E = \{(i, j) \ | \ (i + j) \pmod{q^2+q+1} \in D\}$. 
\end{defn}

\begin{lem}[Lakhotia, Isham, Monroe, Besta, Hoefler, Petrini \cite{polarfly}]
    \label{polarfly}
    For all $q \ge 2$, the Singer graph $S_q$ is Hamiltonian. 
\end{lem}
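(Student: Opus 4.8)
The plan is to move the problem to the polarity graph. Fixing a Singer cycle on $PG(2,q)$ identifies the points with $\mathbb{Z}_{q^2+q+1}$ so that the polarity becomes $i\mapsto\{\,j:i+j\in D\,\}$ for a fixed planar difference set $D$ of size $q+1$; thus $S_q\cong ER_q$, the equivalent construction mentioned just before \cref{Sq_def}. By \cref{erq-properties}, $S_q$ has $n:=q^2+q+1$ vertices, minimum degree $q$, and diameter $2$, and moreover any two distinct vertices have at most one common neighbour --- for adjacent pairs this is part of \cref{erq-properties}, and for non-adjacent pairs it holds because two distinct lines of $PG(2,q)$ meet in exactly one point (and such a pair then has exactly one common neighbour, by diameter $2$). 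Since the degree is only $\Theta(\sqrt n)$, neither \cref{Dirac} nor the Chv\'{a}tal--Erd\H{o}s condition applies, so I would argue directly, treating a bounded range of small $q$ by computer and large $q$ structurally. Here one caveat is unavoidable: when $q=2$ the three vertices of degree $q=2$ share a common neighbour, so $S_2$ has no Hamilton cycle; the intended statement is therefore for $q$ at least some small threshold, with the few remaining small values checked by hand.

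For large $q$ the engine is that the ``at most one common neighbour'' property makes $S_q$ an extremely strong vertex-expander. Double-counting the paths of length $2$ having both ends in a vertex set $S$ --- there are at most $|S|(|S|-1)$ of them by the common-neighbour bound, and they can also be counted from their midpoints, which lie in $N(S)$ and together receive the $\Theta(q|S|)$ edges leaving $S$ --- gives, by convexity, a lower bound of the form: $|N(S)|$ is at least a constant times $q|S|$ for $|S|$ up to roughly $q$, and at least a constant times $q^2\sim n$ once $|S|$ reaches $q$. Expansion of this quality (small sets blow up by a factor $\sim q$, and any set of size about $q$ already meets a positive fraction of the graph, which in particular forces connectivity $\Theta(q)$) is much more than a P\'osa rotation--extension argument requires: from a longest path one rotates to build a large set of alternative endpoints; the expansion forces this set to reach a vertex off the path (lengthening it) or to admit a chord (closing a cycle which the connectivity then lets one reopen through any missing vertex), and iterating produces a Hamilton cycle. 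Two alternatives are worth noting: (i) use that $ER_q$ is near-Ramanujan, with second eigenvalue $O(\sqrt q)=O(n^{1/4})$, and quote the known theorem that $(n,d,\lambda)$-graphs with $\lambda$ much smaller than $d$ are Hamiltonian; or (ii) build the cycle explicitly from the involutions $\sigma_d(v)=d-v$, $d\in D$ --- since $D$ is planar some $a,b\in D$ satisfy $b-a=1$, alternating $\sigma_a$ and $\sigma_b$ traces a closed walk of length $2n$ that covers every vertex exactly twice, and one patches this double cover into a single $n$-cycle using a bounded number of the remaining $\sigma_c$.

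The main obstacle is the same in each route: converting ``lots of edges / lots of expansion'' into a genuinely spanning cycle without repeats. For the expansion route this is the bookkeeping of the rotation--extension argument at the low degree $\Theta(\sqrt n)$, and in particular checking that the correction terms in the path count (the $\binom{|S|}{2}$ and the midpoint multiplicities) still leave a multiplicative gain above $1$ across the whole relevant range of $|S|$. For the explicit-construction route it is choosing the patching edges so that the final list $v_0,\dots,v_{n-1}$ is an honest permutation of $\mathbb{Z}_n$, which one would settle either by an explicit formula in terms of $q$ and $D$ or by a greedy existence argument. In both cases the genuinely small and degenerate values of $q$ fall outside the range where the estimates bite and are handled separately.
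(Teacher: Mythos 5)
Your submission is a program, not a proof, and the crux is left open in every one of its three routes. The paper itself never proves this lemma (it is imported from the PolarFly reference); the only trace of the underlying argument in the paper is the explicit construction given in the section on graceful radio labelings of $ER_q$ and $\overline{ER_q}$: a Hamiltonian path in $S_q$ obtained by alternating the two involutions $v\mapsto d_0-v$ and $v\mapsto d_1-v$ for $d_0,d_1\in D$ with $\gcd(d_0-d_1,\,q^2+q+1)=1$, started at the special vertex $v_1=\tfrac{q^2+q+2}{2}d_1$. Your alternative (ii) is this idea, but it is missing exactly the ingredient that makes it work. Starting at a generic vertex, alternating $\sigma_a,\sigma_b$ does give a closed walk covering every vertex twice, and your plan to ``patch this double cover into a single $n$-cycle using a bounded number of the remaining $\sigma_c$'' is an unproved step --- nothing in your sketch shows such a patching exists or that the result is a permutation, and this is precisely the difficulty you yourself flag as the main obstacle. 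The known construction avoids the doubling altogether: since $2v_1\equiv d_1 \pmod{q^2+q+1}$, the starting vertex is the fixed (absolute) point of $\sigma_{d_1}$, and with $\gcd(d_0-d_1,n)=1$ (no need for $b-a=1$) the alternating sequence is automatically a simple path through all $q^2+q+1$ vertices. Your main route (expansion plus P\'osa rotation--extension) and alternative (i) (spectral) are at best plausible programs for large $q$: the expansion estimates needed for rotations at degree $\Theta(\sqrt n)$ are never verified, $ER_q$ is not regular so the $(n,d,\lambda)$ Hamiltonicity theorems do not apply verbatim, and both routes explicitly exclude the small values of $q$ that the statement covers, deferring them to unspecified computation. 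So there is a genuine gap: no route is carried to completion, and the one elementary route that works is stated without its key trick.

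On the other hand, your caveat about $q=2$ is a correct and relevant catch: in $S_2\cong ER_2$ the three quadric vertices of degree $2$ share the common neighbour corresponding to the nucleus, so a Hamiltonian cycle would need three cycle-edges at that vertex, and none exists; $S_2$ is only traceable. Thus the lemma as quoted (``for all $q\ge 2$ \dots\ Hamiltonian'') should either assume $q\ge 3$ or be weakened to the existence of a Hamiltonian path. The paper is unaffected, since \cref{ERQcomplement} only uses that $ER_q$ has a Hamiltonian path (via \cref{singer_iso}), and the alternating-involution construction above produces one for every prime power $q\ge 2$.
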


\begin{lem}[Erskine, Fratri\v{c}, and \v{S}irá\v{n} \cite{singer}]\label{singer_iso}
    Let $q$ be any prime power. Then the graphs $S_q$ and $ER_q$ are isomorphic.
\end{lem}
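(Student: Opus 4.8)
The plan is to put $S_q$ and $ER_q$ into a common model, built from the field $\mathbb{F}_{q^3}$, in which both graphs are visibly the same ``sum graph'' attached to one Singer difference set. Write $N=q^2+q+1$ and fix a generator $\omega$ of $\mathbb{F}_{q^3}^{*}$. Since $|\mathbb{F}_{q^3}^{*}|=(q-1)N$, the power $\omega^{N}$ generates $\mathbb{F}_q^{*}$, so $i\bmod N\mapsto\omega^{i}\mathbb{F}_q^{*}$ identifies $\mathbb{Z}_N$ with $\mathbb{F}_{q^3}^{*}/\mathbb{F}_q^{*}$; after fixing an $\mathbb{F}_q$-linear isomorphism $\mathbb{F}_{q^3}\cong\mathbb{F}_q^{3}$, these cosets are exactly the points of $PG(2,q)$. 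Under this labeling the collineation ``multiply by $\omega$'' is the Singer cycle $i\mapsto i+1$, which by Singer's theorem acts regularly on both points and lines; consequently the lines are exactly the $N$ translates of a single $(q+1)$-element planar difference set, and every planar difference set obtained this way from $PG(2,q)$ has the form $aD+b$ for some $a\in\mathbb{Z}_N^{*}$ (indeed a power of $q$) and $b\in\mathbb{Z}_N$, where $D$ is the Singer difference set fixed in \cref{Sq_def}.

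Next I would realize $ER_q$ in this model. The polarity of $PG(2,q)$ underlying $ER_q$ is attached to the form $x_0x_1+y_0y_1+z_0z_1$; being a non-degenerate symmetric bilinear form (and, for $q$ even, non-alternating), it is conjugate in $PGL(3,q)$ to the polarity attached to the trace form $B(x,y)=\mathrm{Tr}_{\mathbb{F}_{q^3}/\mathbb{F}_q}(xy)$, which has the same properties. Conjugate polarities yield isomorphic polarity graphs, so up to isomorphism $ER_q$ is the graph on $\mathbb{Z}_N$ with $i\sim j$ exactly when $B(\omega^{i},\omega^{j})=\mathrm{Tr}(\omega^{i+j})=0$; this is well defined modulo $N$ since $\mathrm{Tr}$ is $\mathbb{F}_q$-linear, so $\mathrm{Tr}(\omega^{k})=0$ depends only on $k\bmod N$.

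The key step is then to read off the difference set. Let $D_{0}=\{\,k\in\mathbb{Z}_N:\mathrm{Tr}(\omega^{k})=0\,\}$. The kernel of $\mathrm{Tr}$ is an $\mathbb{F}_q$-hyperplane of $\mathbb{F}_{q^3}$, so $|D_{0}|=(q^{2}-1)/(q-1)=q+1$; moreover $\{\,[y]:\mathrm{Tr}(y)=0\,\}$ is the projectivization of that hyperplane, i.e.\ a line of $PG(2,q)$ with coordinate set $D_{0}$, so $D_{0}$ is a Singer difference set and hence $D_{0}=aD+b$ for suitable $a,b$ as above. Since $N=q(q+1)+1$ is odd, $2$ is a unit of $\mathbb{Z}_N$, so $\phi(i)=a^{-1}i-b/(2a)$ is a bijection of $\mathbb{Z}_N$ satisfying $\phi(i)+\phi(j)\in D\iff i+j\in D_{0}$; thus $\phi$ is a graph isomorphism, and concatenating with the identifications above,
\[
 ER_q\;\cong\;\bigl(\mathbb{Z}_N,\ \{\,\{i,j\}:i+j\in D_{0}\,\}\bigr)\;\cong\;\bigl(\mathbb{Z}_N,\ \{\,\{i,j\}:i+j\in D\,\}\bigr)\;=\;S_q .
\]

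The transparent part is the observation that the orthogonality condition $\mathrm{Tr}(\omega^{i+j})=0$ is literally the ``$i+j\in D$'' adjacency rule of $S_q$. The steps that demand care --- and which I expect to be the main obstacle --- are the two equivalence reductions: replacing the dot-product polarity by the trace-form polarity (this rests on the classification of non-degenerate symmetric bilinear forms over $\mathbb{F}_q$, equivalently on the projective equivalence of all conics of $PG(2,q)$, and of all pseudo-conics when $q$ is even), and matching the trace-zero set $D_{0}$ with the difference set $D$ of \cref{Sq_def} via an affine map of $\mathbb{Z}_N$ compatible with the \emph{sum} adjacency (this uses the uniqueness of the planar difference sets of $PG(2,q)$ up to multipliers and translation). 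The remaining checks are routine.
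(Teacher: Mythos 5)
This lemma is quoted from Erskine, Fratri\v{c}, and \v{S}irá\v{n} \cite{singer}; the paper itself gives no proof, so there is nothing in-text to compare against, but your sketch is essentially the standard (and presumably the cited) argument: model the points of $PG(2,q)$ as $\mathbb{F}_{q^3}^{*}/\mathbb{F}_q^{*}\cong\mathbb{Z}_{q^2+q+1}$, replace the dot-product polarity by the trace-form polarity (legitimate because all non-degenerate symmetric --- non-alternating when $q$ is even --- bilinear forms on $\mathbb{F}_q^3$ define conjugate polarities; in odd characteristic one must allow rescaling by a non-square, which is harmless since a polarity only sees the form up to scalar), and observe that orthogonality becomes the sum condition $i+j\in D_0$ with $D_0$ the trace-zero index set, which is a line of the plane and hence a perfect difference set. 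The affine renormalization $\phi(i)=a^{-1}i-b/(2a)$ using that $N$ is odd is the right way to pass between sum graphs of equivalent difference sets, and it correctly preserves the adjacency rule of \cref{Sq_def}.

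Two caveats. First, the step ``$D_0=aD+b$'' is only automatic if the $D$ of \cref{Sq_def} is a \emph{classical} Singer difference set (one arising from the field construction, for some primitive element and some hyperplane): changing the primitive element multiplies the set by a unit of $\mathbb{Z}_N$ and changing the hyperplane translates it, so all classical Singer sets are affinely equivalent. If $D$ were allowed to be an arbitrary perfect difference set in $\mathbb{Z}_N$ (which is how the paper's definition literally reads), then asserting equivalence to $D_0$ would amount to the open problem that every cyclic planar difference set of order $q$ is equivalent to the Singer one; so your proof, like the cited lemma, should be read with the classical interpretation of $D$. Second, the parenthetical claim that the unit $a$ can be taken to be a power of $q$ is the multiplier statement for a \emph{fixed} Singer set and is not what relates two different classical constructions (there $a$ is an arbitrary unit coming from the change of primitive element); this is immaterial to the argument, since any invertible $a$ suffices. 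With these readings, the sketch is sound.
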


\begin{thm}\label{ERQcomplement}
    $\overline{ER_q}$ is radio graceful.
\end{thm}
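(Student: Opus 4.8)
The plan is to invoke \cref{complement} of Saha and Basunia, which states that if an $n$-vertex graph $G$ satisfies $\Delta(G) < n/2$ and is traceable, then $\overline{G}$ is radio graceful. So it suffices to verify these two hypotheses for $G = ER_q$, where $n = q^2+q+1$.

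First I would check the degree bound. By \cref{erq-properties}, $ER_q$ has $q^2+q+1$ vertices, and every vertex has degree either $q$ (the quadric vertices) or $q+1$, so $\Delta(ER_q) = q+1$. The required inequality $q+1 < \tfrac{1}{2}(q^2+q+1)$ is equivalent to $q^2 - q - 1 > 0$, which holds for every prime power $q \ge 2$ --- the same elementary computation that underlies \cref{erq-ham-path} and \cref{porism:diam2}.

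Second I would establish that $ER_q$ itself is traceable. The key inputs here are \cref{singer_iso}, which identifies $ER_q$ with the Singer graph $S_q$, and \cref{polarfly}, which asserts that $S_q$ is Hamiltonian for all $q \ge 2$. Together these give a Hamiltonian cycle, hence a Hamiltonian path, in $ER_q$. With both hypotheses of \cref{complement} confirmed, we conclude that $\overline{ER_q}$ is radio graceful.

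The main substantive content is really already packaged in the cited results: the genuinely nontrivial fact is the Hamiltonicity of $ER_q$ (equivalently $S_q$), supplied by \cref{polarfly}, so there is no significant remaining obstacle. It is worth noting why one cannot shortcut this via \cref{erq-ham-path}: that lemma shows $\overline{ER_q}$ is traceable, which together with \cref{Antipodal_2} would yield radio gracefulness of a diameter-$2$ graph whose antipodal graph is $\overline{ER_q}$ --- but the graph whose radio gracefulness we now want is $\overline{ER_q}$, and its diameter is not manifestly $2$. Hence we route through \cref{complement}, which requires traceability of $ER_q$ (not of its complement), and that is exactly what the Singer-graph identification provides.
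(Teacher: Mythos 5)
Your proposal is correct and follows exactly the paper's argument: traceability of $ER_q$ via the Singer graph isomorphism (\cref{singer_iso}) and Hamiltonicity of $S_q$ (\cref{polarfly}), combined with the degree bound $\Delta(ER_q)=q+1<\frac{q^2+q+1}{2}$ to apply \cref{complement}. No gaps.
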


\begin{proof}
    From \cref{polarfly} and \cref{singer_iso} it follows that $ER_q$ has a Hamiltonian path. For $q\geq 2$, we have $\Delta(ER_q)=q+1<\frac{q^2+q}2=\frac{|V(ER_q)|-1}{2}$. By \cref{complement}, it follows that $\overline{ER_q}$ is radio graceful.
\end{proof}

\subsubsection{Graceful Radio Labeling of $ER_q$ and $\overline{ER_q}$}

A graceful radio labeling of $ER_q$ and $\overline{ER_q}$ using the construction of Hamiltonian cycles in Singer graphs described in \cite{polarfly} is now given.

\begin{construction}
Choose $d_0$ and $d_1$ from the Singer difference $D$ and choose $j_0, j_1 \in \mathbb{N}$ such that $d_0-j_0 \notin D$, $d_1-j_1 \notin D$, and $\gcd((d_0-j_0)-(d_1-j_1),q^2+q+1)=1$. A Hamiltonian path $v_1,v_2,\ldots,v_{q^2+q+1}$ in $\overline{S_q} \cong \overline{ER_q}$ can be constructed by setting $v_1=\frac{q^2+q+2}{2}d_1-1$, setting $v_i=d_0-j_0-v_{i-1}$ if $i$ is even, and setting $v_i=d_1-j_1-v_{i-1}$ if $i$ is odd. To generate the radio graceful labeling of $ER_q$, give each vertex $v_i$ the label $i$.
\end{construction}

Note, by \cref{Sq_def}, that $(v_{i-1}, v_{i}) \notin E(ER_q)$ as $v_{i-1} + v_{i}=d_0-j_0$ if $i$ is even where $d_0-j_0 \notin D$ and $v_{i-1} + v_{i}=d_1-j_1$ if $i$ is odd where $d_1-j_1 \notin D$.

\begin{construction}
Choose $d_0$ and $d_1$ from the Singer difference $D$ such that $\gcd(d_0-d_1,q^2+q+1)=1$. A Hamiltonian path $v_1,v_2,\ldots,v_{q^2+q+1}$ in $S_q \cong ER_q$ can be constructed by setting $v_1=\frac{q^2+q+2}{2}d_1$, setting $v_i=d_0-v_{i-1}$ if $i$ is even, and setting $v_i=d_1-v_{i-1}$ if $i$ is odd $\cite{polarfly}$. To generate the radio graceful labeling of $\overline{ER_q}$ give each vertex $v_i$ the label $i$.
\end{construction}

\section{Conclusions and Open Problems}\label{sec:open}

We have examined the radio gracefulness of different families of cages, including cages with girth 6, 8, or 12, which can be nicely represented as incidence graphs of generalized polygons. In addition, we considered many other known cages detailed in Figure~\ref{fig:enter-label}. By \cref{Antipodal}, almost all these cages are not radio graceful, with the only exceptions being complete graphs, which are Moore graphs of girth $3$, and Moore graphs of girth $5$ and $6$. Given these facts, we ask the following question.

\begin{figure}[H]
    \centering
    \includegraphics[width=0.5\linewidth]{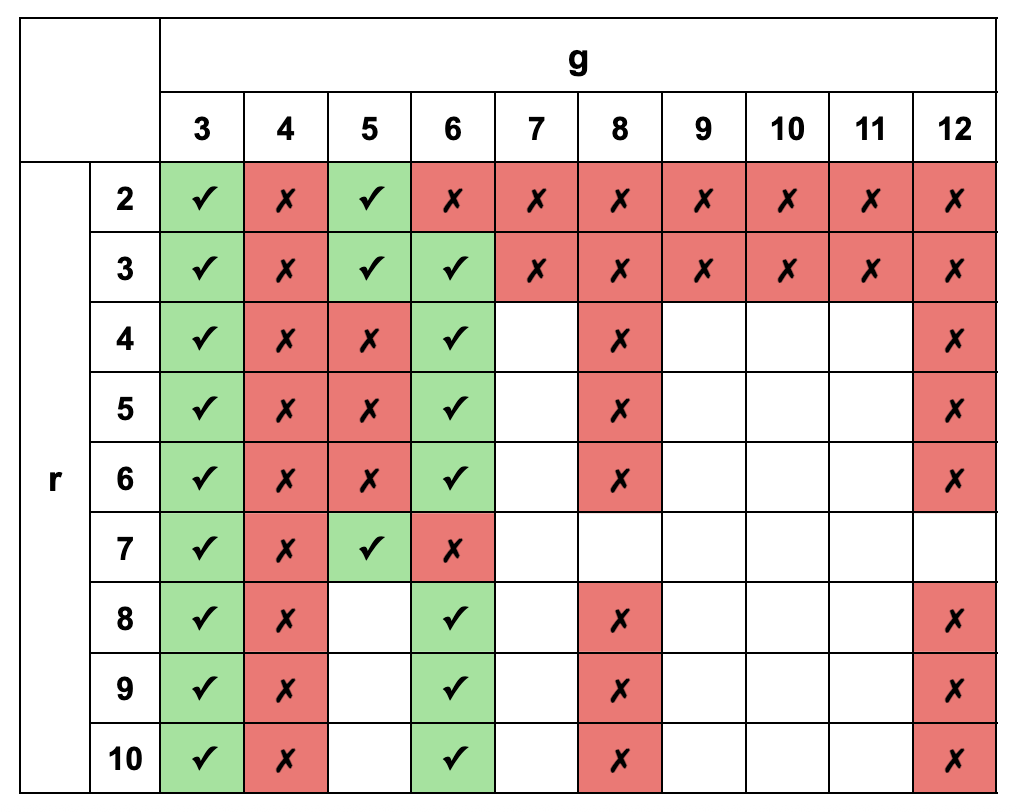}
    \caption{Radio gracefulness of various $(r,g)-$cages}
    \label{fig:enter-label}
\end{figure}

\begin{qu}
    Are the only radio graceful $(r,g)-$cages Moore graphs of girth $3$, $5$, and $6$?
\end{qu}

\section*{Acknowledgments}

This research was conducted in Summer 2025 as part of the Summer Undergraduate Applied Mathematics Institute at Carnegie Mellon University, during which the third and fourth authors were supported by the NSF Grant REU-2244348. The second author is supported by the NSF Grant DMS-2536176.

\bibliographystyle{plainurl}
\bibliography{references}
\end{document}